\theoremstyle{plain}
\newtheorem{thm}{Theorem}
\newtheorem{cor}[thm]{Corollary}
\newtheorem{lem}[thm]{Lemma}
\newtheorem{prop}[thm]{Proposition}
\theoremstyle{definition}
\theoremstyle{remark}
\newtheorem*{remnonum}{Remark}
\theoremstyle{plain}
\newcommand{\Id}{{{\mathchoice {\rm 1\mskip-4mu l} {\rm 1\mskip-4mu l}
      {\rm 1\mskip-4.5mu l} {\rm 1\mskip-5mu l}}}}
\renewcommand\ker{\operatorname{ker}}
\newcommand\Area{\operatorname{Area}}
\def\ZZ{\mathbb{Z}}
\def\RR{\mathbb{R}}
\def\dd{\text{d}}
\def\Ham{Ham}
\def\Cal{Cal}
\def\traj{\operatorname{traj}}
\def\genus{genus}
\begin{document}

\bibliographystyle{alphanum}

\title{Hofer's length spectrum of symplectic surfaces}
\date{\today} 
\author{Michael Khanevsky}
\address{Michael Khanevsky, Dept. of Mathematics, 
University of Chicago, 5734 S. University Avenue, Chicago, IL 60637, USA}
\email{khanev@math.uchicago.edu}
\thanks{The author was supported by NSF grant DMS-1105813 and also a Simons instructorship.}

\begin{abstract}
Following a question of F. Le Roux, we consider a system of invariants $l_A : H_1 (M) \to \RR$ of a symplectic surface $M$.
These invariants compute the minimal Hofer energy needed to translate a disk of area $A$ along a given homology class and
can be seen as a symplectic analogue of the Riemannian length spectrum.
When M has genus zero we also construct Hofer- and $C_0$-continuous quasimorphisms $\Ham(M)\to H_1(M)$ that compute trajectories
of periodic nondisplaceable disks.
\end{abstract}

\maketitle

\section{Introduction and results}

Let $(M, \omega)$ be an open symplectic surface of finite type, possibly with boundary. Pick an open disk $\mathcal{D} \subset M$ of 
$\Area(\mathcal{D}) = \int_\mathcal{D} \omega = A$
and a compactly supported Hamiltonian $\phi \in \Ham(M)$ such that $\phi (\mathcal{D}) = \mathcal{D}$.
We define the \emph{trajectory} $\traj_{\mathcal{D}} (\phi) \in \pi_1(M)$ in the following way.
Pick a Hamiltonian isotopy $\phi_t$ which connects the identity to $\phi$. Roughly speaking, 
$\traj_{\mathcal{D}} (\phi)$ is the free homotopy class of the trajectory of $\mathcal{D}$ under $\phi_t$. 
More formally, pick $x \in \mathcal{D}$ and denote by $\gamma \subset \mathcal{D}$ a curve which connects $\phi (x)$ with $x$.
We catenate the trajectory of $x$ under $\phi_t$ with $\gamma$:
\[
  \traj_{\mathcal{D}} (\phi) = [\{\phi_t (x)\}_t * \gamma] \in \pi_1 (M).
\]
It is not difficult to see that $\traj_{\mathcal{D}} (\phi)$ does not depend on the choice of $x$, $\gamma$ and $\phi_t$.

Denote by $\| \cdot \|$ the Hofer norm on the group $\Ham(M)$: 
\[
	\| \phi \| = \inf \int_0^1 \max_{p \in M} H (p, t) - \min_{p \in M} H (p, t) \mathrm{d}t ,
\]
where the infimum goes over all compactly supported Hamiltonians $H: M \times [0, 1] \to \RR$ 
such that $\phi$ is the time-$1$ map of the corresponding flow.

\medskip

Following F. Le Roux we consider the following invariant. Given $\alpha \in \pi_1 (M)$ and a disk $\mathcal{D}$ of area 
$0 < A < \Area(M)$ we define
\[
  l_A (\alpha) = \inf \{ \|\phi\| \; \big| \; \phi \in \Ham(M) \; \text{s.t.} \; \phi (\mathcal{D}) = \mathcal{D} \; \text{and} \; 
			\traj_{\mathcal{D}} (\phi) = \alpha \}.
\]
Since all disks in $M$ of the same area are equivalent, $l_A$ depends only on the area $A$ and not on a choice of particular $\mathcal{D}$. 

Clearly, $l_A$ satisfies the triangle inequality: let $a, b$ be two based loops with the same base point, denote $c = a * b$.
Then $l_A ([c]) \leq l_A ([a]) + l_A ([b])$. Using the argument from ~\cite{La-MD:L-infty-geom}, 
one shows that $l_A (\alpha) \geq A$ for all $\alpha \neq 0$. Namely, given $\phi \in \Ham(M)$ with $\traj_{\mathcal{D}} (\phi) \neq 0$, 
the lift of $\phi$ to the universal cover displaces 
a lift of $\mathcal{D}$, hence $\| \phi \| \geq A$ by the energy-capacity inequality. Therefore $l_A$ is a nontrivial system of invariants
which behaves as a symplectic analog of the length spectrum.

Question 5 from ~\cite{LR:6-questions} addresses a computation of $l_A$ in the case when $M$ is an annulus and $\mathcal{D}$ is nondisplaceable.
Its solution appeared in ~\cite{Kh:disk-an}, this paper extends the results to general surfaces.

\begin{remnonum}
One may change the definition of $l_A$ and restrict attention to those $\phi$ that fix $\mathcal{D}$ pointwise. All estimates and results
presented in this article remain true also for this setting, the same ideas apply after minor adjustments of the argument.
\end{remnonum}

\medskip

In this article we discuss a weaker version of this invariant where we consider trajectories in a class $\alpha \in H_1 (M; \ZZ)$. 
We prove the following:
\begin{thm} \label{T:l_bound} given $0 < A < \Area(M)$ and $\alpha \in H_1 (M; \ZZ)$, the following estimates hold:
  \begin{itemize}
	 \item
		If $A \leq \Area(M) / 2$, then $l_A (\alpha) \leq 2 A$.
	 \item
		If $\genus(M) > 0$, then $l_A (\alpha) \leq 2 A$.
	 \item
		Otherwise ($\genus(M) = 0$ and $A > \Area (M)/2$) $l_A : H_1 (M; \ZZ) \to \RR$ is comparable to a homogeneous norm on $H_1 (M; \ZZ)$.
		It is not important which norm we consider: $H_1 (M; \RR)$ is finitely generated, hence all homogeneous norms on $H_1 (M; \ZZ)$ are equivalent.
  \end{itemize}
\end{thm}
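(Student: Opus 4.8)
The plan is to separate the statement into a \emph{uniform upper bound} $l_A(\alpha)\le 2A$, valid for all $\alpha$ and responsible for the first two bullets, and a \emph{lower bound} in the third case matching the obvious subadditive upper bound there; the hypotheses $\genus(M)=0$ and $A>\Area(M)/2$ will be used only for the lower bound, which is where the real content lies. Throughout I will use that area forms on surfaces are locally standard, that $\|\cdot\|$ is conjugation invariant, and that conjugating by a Hamiltonian diffeomorphism supported in a topological ball changes neither $\|\phi\|$ nor $\traj_{\mathcal D}(\phi)$ (after replacing $\mathcal D$ by its image), so that $\mathcal D$ may be freely reshaped and repositioned.

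\textbf{Uniform upper bound (bullets 1 and 2).} Realize $\alpha$ by an embedded oriented multicurve and join its components by thin arcs into a connected (immersed) loop $\gamma$ with $[\gamma]=\alpha$. The idea is to transport $\mathcal D$ once around $\gamma$ and back onto itself, e.g.\ by a travelling-wave Hamiltonian supported in a thin tube about $\gamma$ together with an auxiliary region $R$ into which the area of $\mathcal D$ is temporarily displaced: the leading part of $\mathcal D$ advances along $\gamma$ while the trailing part is vacated, so that the occupied set always has area close to $A$; after one full loop $\mathcal D$ has returned to itself and a marked point has traced a loop of class $[\gamma]=\alpha$. One then bounds the oscillation of the generating Hamiltonian by $A$ on the outgoing half of the motion and again by $A$ on the return, giving $\|\phi\|\le 2A$. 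The hypotheses serve only to make the region $R$ available cheaply: if $A\le\Area(M)/2$ the complement of $\mathcal D$ has area $\ge A$, so $R$ can be a fat channel rather than a narrow neck; if $\genus(M)>0$ one instead routes $\gamma$ through a handle and realizes the required cycles by annular, Dehn-twist-type sub-moves whose Hofer cost is proportional to the (arbitrarily small) area of a thin annulus, so $2A$ again suffices regardless of $A$. Since this bound does not depend on $\alpha$, the triangle inequality is never invoked.

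\textbf{Lower bound (bullet 3).} Assume $\genus(M)=0$, $A>\Area(M)/2$ and $\alpha\ne 0$. The upper bound is the easy half: $l_A$ is subadditive and each generator $e_i$ of $H_1(M;\ZZ)\cong\ZZ^k$ satisfies $l_A(e_i)<\infty$ (rotate $\mathcal D$ around the $i$-th end by some finite-energy isotopy), so $l_A(\alpha)\le\big(\max_i l_A(e_i)\big)\,|\alpha|$ with $|\cdot|$ the $\ell^1$-norm in this basis. For the matching lower bound, pick $f\colon H_1(M;\ZZ)\to\ZZ$ with coefficients in $\{-1,0,1\}$ and $f(\alpha)=|\alpha|$, and let $p\colon M_f\to M$ be the associated infinite cyclic cover, with deck group generated by $\tau$. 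If $\phi(\mathcal D)=\mathcal D$ and $\traj_{\mathcal D}(\phi)=\alpha$, then $H\circ p$ generates a $\ZZ$-equivariant lift $\widehat\phi\in\Ham(M_f)$ with $\|\widehat\phi\|\le\|\phi\|$ sending a chosen lift $\widehat{\mathcal D}$ to $\tau^{f(\alpha)}\widehat{\mathcal D}$, and by equivariance $\widehat\phi$ shifts the whole disjoint $\tau$-periodic family $\{\tau^{j}\widehat{\mathcal D}\}_{j\in\ZZ}$ by $f(\alpha)$ places. Since $A>\Area(M)/2$, each $\tau$-period of $M_f$ (of area $\Area(M)$) is more than half occupied by members of the family, so consecutive disks are separated by narrow necks, and an equivariant diffeomorphism driving $\widehat{\mathcal D}$ past $\sim\!|f(\alpha)|$ such necks must have Hofer norm growing linearly in $|f(\alpha)|$. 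I would prove this estimate by the barrier and energy--capacity argument of \cite{Kh:disk-an}, adapted from the annulus to the periodic surface $M_f$; alternatively it follows by evaluating $\phi$ on a Hofer--Lipschitz homogeneous quasimorphism $\Ham(M)\to H_1(M;\RR)$ that computes trajectories of the nondisplaceable disk up to a bounded error. Either way $l_A(\alpha)\ge c\,|f(\alpha)|-C=c\,|\alpha|-C$, so $l_A$ agrees with $|\cdot|$ up to multiplicative and additive constants; the final sentence of the statement is then merely the equivalence of all homogeneous norms on the finitely generated group $H_1(M;\ZZ)$.

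\textbf{The main obstacle} is this linear lower bound: bounding from below the Hofer energy of a $\ZZ$-equivariant diffeomorphism of the noncompact surface $M_f$ that translates a periodic, more-than-half-filling family of area-$A$ disks. Each such disk is displaceable in $M_f$ with energy $A$, so the estimate cannot come from a single displacement and must genuinely exploit equivariance — via iterated barriers, a Calabi-type invariant on $M_f$, or the quasimorphism above — and turning this into a clean linear bound is the heart of the argument. A smaller but still delicate point is to carry out the travelling-wave construction so that $\mathcal D$ truly returns to itself with trajectory exactly $\alpha$ while the oscillation stays at $2A$, uniformly over all multicurve representatives, non-primitive classes included.
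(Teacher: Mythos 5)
Your overall architecture (a uniform $2A$ bound for the first two bullets, norm comparability in the third with the lower bound coming from a Hofer--Lipschitz quasimorphism) matches the paper, but the key mechanisms are missing or wrong in two places. First, the upper bound when $\genus(M)>0$ and $A>\Area(M)/2$: your travelling-wave picture needs somewhere to ``vacate the trailing part'' of $\mathcal D$, and the complement of $\mathcal D$ has area $<A$, so the fat-channel/auxiliary-region device is unavailable; and Dehn-twist-type moves in a thin annulus of small area do not transport a nondisplaceable disk along a homology class at all (any nontrivial transport already costs at least $A$ by energy--capacity). The paper's solution is different: a circular pipe attached to $\partial\mathcal D$ lets the disk circulate back onto itself along any \emph{simple} loop at cost $A+\varepsilon$, and a separate topological lemma shows that in positive genus every class of $H_1(M;\ZZ)$ is a sum of two classes represented by simple loops (proved via the torus, the punctured torus, and connected sums), whence $l_A(\alpha)\le 2A$ by the triangle inequality --- which you explicitly claim not to need. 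Relatedly, even in the displaceable case your tube is taken around an \emph{immersed} loop, so it self-intersects and the ``travelling-wave Hamiltonian supported in a thin tube'' is not defined as stated; the paper avoids this with a lemma decomposing any $\alpha\in\pi_1(M)$ as $[\gamma_1 * -\gamma_2]$ with $\gamma_1,\gamma_2$ simple arcs sharing endpoints, moving the disk out along one embedded pipe and back along another, each leg costing $A+\varepsilon$ (plus a small extra approximation to handle $A=\Area(M)/2$).

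Second, the lower bound in the genus-zero, $A>\Area(M)/2$ case is exactly the hard content, and your proposal does not supply it: your primary route (equivariant lift to the infinite cyclic cover $M_f$ and a linear-in-$|f(\alpha)|$ energy bound from ``iterated barriers'') is left as an acknowledged obstacle, and it is genuinely problematic --- each lifted disk is displaceable in $M_f$ with energy $A$, the lift is not compactly supported, and no mechanism is given that converts equivariance into linear growth of Hofer energy. Your ``alternative'' is to evaluate $\phi$ on a Hofer--Lipschitz homogeneous quasimorphism computing trajectories of large invariant disks, but that is precisely the paper's Theorem~\ref{T:quas}, whose construction (pulling back the Entov--Polterovich Calabi quasimorphism on $S^2$ under a family of annulus embeddings and taking differences, then verifying vanishing on the stabilizer $S_0$ via the pair-of-pants mapping class group and flux) occupies Section~\ref{S:qm}; invoking its existence without that construction leaves the heart of the third bullet unproved.
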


Certain partial results are available also for the homotopical version $l_A :  \pi_1 (M) \to \RR$, see the discussion in Section~\ref{S:disc_homo}.

\medskip

We also show:
\begin{thm} \label{T:quas}
  Let $(M, \omega)$ be a disk with $k$ punctures, pick $A \in \left(\frac{1}{2}\Area (M), \Area (M)\right)$. 
  There exists a family $P_A = \{\rho_A\}$ of invariants $\rho_A : \Ham (M) \to H_1 (M; \RR)$
  that satisfy:
  \begin{itemize}
	\item
	  $\rho_A$ is a homogeneous quasimorphism,
	\item
	  $\rho_A$ is Hofer-Lipschitz and $C_0$-continuous,
	\item
	  Suppose that $\phi \in \Ham(M)$ has an invariant disk $D$ of $\Area(D) \geq A$. Then 
	  $\rho_A (\phi) = [\traj_{D} (\phi)] \in H_1 (M; \ZZ) \subset H_1 (M; \RR)$.
  \end{itemize}
  $P_A$ contains continuum of such quasimorphisms that are linearly independent.
\end{thm}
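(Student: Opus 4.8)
The plan is to obtain each $\rho_A$ as the homogenisation of a ``coarse trajectory'' quasimorphism $\tau\colon\Ham(M)\to H_1(M;\RR)$ constructed on the maximal free abelian cover of $M$, adapting the annulus construction of \cite{Kh:disk-an}. Let $p\colon\widehat M\to M$ be the regular covering with deck group $H_1(M;\ZZ)\cong\ZZ^k$, equipped with the pulled back area form and the (free, symplectic) deck action. A compactly supported $\phi\in\Ham(M)$ lifts to a symplectomorphism $\widehat\phi$ of $\widehat M$ which is the identity outside $p^{-1}(\operatorname{supp}\phi)$; since $\widehat\phi(\widehat x)$ is the endpoint of the lift of the flow trajectory through $x$, the map $\widehat\phi$ displaces every point by a bounded amount, and the lift is independent of the chosen generating isotopy (a loop of compactly supported Hamiltonians fixes some point of $\widehat M$ and hence lifts to the trivial deck transformation). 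The same applies verbatim to $\operatorname{Symp}_0(M)$, which I use below. Fix once and for all a reference disk $\mathcal D\subset M$ of area $A$, a lift $\widehat{\mathcal D}$, and a $\ZZ^k$-equivariant coordinate $\Theta\colon\widehat M\to H_1(M;\RR)$ with $\Theta(g\cdot\widehat x)=\Theta(\widehat x)+g$ (integrals of a basis of closed $1$-forms).

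Because $A>\tfrac{1}{2}\Area(M)$, any area-$A$ disk of $M$ meets $\mathcal D$, so the area-$A$ set $\widehat\phi(\widehat{\mathcal D})$ meets the disjoint lattice $\{g\cdot\widehat{\mathcal D}\colon g\in\ZZ^k\}$, and one wants to read off from this overlap pattern a single element $\tau(\phi)\in H_1(M;\RR)$ measuring the net homological drift of $\widehat{\mathcal D}$ under $\widehat\phi$. The crux is to arrange this so that (i) $\tau$ has a uniformly bounded defect $\sup_{\phi,\psi}|\tau(\phi\psi)-\tau(\phi)-\tau(\psi)|$; (ii) $|\tau(\phi)|\le C\|\phi\|$, which follows by chopping the area-$(\Area(M)-A)$ complement $M\setminus\mathcal D$ into a controlled number of pieces and iterating the energy--capacity inequality, as in the proof recalled in the introduction that $l_A(\alpha)\ge A$; and (iii) $\tau$ is $C^0$-continuous, which is easy, since a $C^0$-small perturbation of $\phi$ lifts to a $C^0$-small map and moves $\widehat\phi(\widehat{\mathcal D})$ only slightly against the fixed-mesh lattice. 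Granting a cocycle with these properties, set $\rho_A:=\lim_{n\to\infty}\tfrac{1}{n}\tau(\phi^n)$; by (i) this is a homogeneous quasimorphism with $|\rho_A-\tau|\le D$ uniformly, so $|\rho_A(\phi)|=\tfrac{1}{n}|\rho_A(\phi^n)|\le\tfrac{1}{n}(C\|\phi^n\|+D)\to C\|\phi\|$ and $\rho_A$ is a uniform limit of the $C^0$-continuous functions $\phi\mapsto\tfrac{1}{n}\tau(\phi^n)$, hence Hofer-Lipschitz and $C^0$-continuous.

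For the rigidity property, suppose $\phi$ has an invariant disk $D$ with $\Area(D)\ge A$ and run the construction with reference disk an area-$A$ subdisk $D'\subset D$. By definition of the trajectory, the lift $\widehat\phi$ carries the lift $\widehat D$ of $D$ onto the deck translate $g_0\cdot\widehat D$ with $g_0=[\traj_D(\phi)]$; as $\widehat D$, hence $g_0\cdot\widehat D$, contains a single lift of $D'$, the set $\widehat\phi(\widehat{D'})\subset g_0\cdot\widehat D$ overlaps exactly one lattice element, namely $g_0\cdot\widehat{D'}$, so $\tau(\phi)=g_0$; the same holds for every $\phi^n$ since $\traj_D(\phi^n)=\traj_D(\phi)^n$, whence $\rho_A(\phi)=[\traj_D(\phi)]$. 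Passing from reference disk $D'$ back to $\mathcal D$ replaces $\rho_A$ by a conjugate under an element of $\operatorname{Symp}_0(M)$ (any two area-$A$ disks being symplectically isotopic), under which a homogeneous quasimorphism — one extending over $\operatorname{Symp}_0(M)$ — is invariant; so this value is computed by our fixed $\rho_A$, and is in particular independent of the invariant disk (consistently with the fact that two area-$\ge A$ invariant disks intersect, as $2A>\Area(M)$).

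For the continuum statement, the extraction rule defining $\tau$ is not unique: one may twist it by a Hofer-Lipschitz, $C^0$-continuous homogeneous quasimorphism on $\Ham(M)$ built from a homogeneous quasimorphism on $\pi_1(M)\cong F_k$ that vanishes on the (restricted) set of homotopy classes realised as $\traj_D(\phi)$ by some $\phi$ with an invariant disk of area $\ge A$; for $k\ge2$ there is a continuum of linearly independent ones (Brooks counting quasimorphisms supported on the relevant conjugacy classes), and for $k=1$ this is \cite{Kh:disk-an}. Such a twist leaves $\rho_A$ unchanged on maps with a large invariant disk, where the coarse trajectory is already the integral class $[\traj_D(\phi)]$, so every member of the resulting family $P_A$ satisfies the three bulleted properties, and linear independence of a finite subcollection is verified by pairing a prescribed linear combination with compactly supported Hamiltonians modelled on the relevant loops in a punctured-disk chart. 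The step I expect to be the main obstacle is (i): turning the overlap pattern into a cocycle with bounded defect. A naive overlap-weighted average of $\Theta$ fails, because a flux- or shear-type deformation can move $\widehat\phi(\widehat{\mathcal D})$ across the lattice without any net winding while changing the average by an unbounded amount; the fix must isolate the genuine homological drift of a large region, and this is precisely where $A>\tfrac{1}{2}\Area(M)$ — forcing the iterates' images to overlap the lattice rigidly rather than shearing across it — is indispensable.
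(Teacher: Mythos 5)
Your outline defers exactly the point where the theorem lives. You write ``granting a cocycle with these properties'' and then acknowledge at the end that producing a bounded-defect $\tau$ from the overlap pattern is ``the main obstacle''; but that existence statement, together with the Hofer--Lipschitz bound, \emph{is} the theorem, and none of the soft ingredients you invoke supplies it. In particular, the claim that $|\tau(\phi)|\le C\|\phi\|$ ``follows by chopping $M\setminus\mathcal D$ into pieces and iterating the energy--capacity inequality'' does not work: displacement of a lifted disk gives the single bound $\|\phi\|\ge A$, and there is no mechanism by which one application per lattice step accumulates, since it is the same map $\phi$ (with the same energy) that realizes the whole drift. If such an elementary argument existed it would already answer Le Roux's question (the annulus case) without any hard machinery, whereas the paper's proof of the Lipschitz and bounded-defect properties rests entirely on the Entov--Polterovich Calabi quasimorphism on $\Ham(S^2)$: one compactifies the annulus to a sphere of area $2A$ by two embeddings $j_{s_1},j_{s_2}$, sets $\rho=Cal_{j_{s_2}}-Cal_{j_{s_1}}$, and then the genuinely delicate step is showing $\rho$ vanishes on the stabilizer $S_0$ of the disk with trivial trajectory, which uses the mapping class group of the pair of pants, flux, and an explicit Calabi computation. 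Your coarse-overlap $\tau$ has no analogue of this input, and the $C^0$-continuity you call ``easy'' is in the paper a consequence of the Entov--Polterovich--Py continuity theorem for quasimorphisms vanishing on disk-supported maps, applied after the vanishing on $\Ham(U)$ is established.

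The continuum statement has the same problem one level up: twisting by ``a Hofer-Lipschitz, $C^0$-continuous homogeneous quasimorphism on $\Ham(M)$ built from a quasimorphism on $\pi_1(M)\cong F_k$'' presupposes a way to convert counting quasimorphisms on the fundamental group into quasimorphisms on $\Ham$ that respect Hofer's metric; no such construction is known (the paper itself remarks, in the discussion of the homotopical spectrum, that the known surface-group quasimorphism constructions are not known to respect Hofer's metric). The paper instead gets a continuum of linearly independent $\rho_A$ for free, by varying the gluing parameter $s$ in the sphere embeddings and evaluating on autonomous Hamiltonians $H=H(h)$, where the median level set is $S^1\times\{A-s\}$. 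Your reduction of the theorem for the $k$-punctured disk to the annulus case (fill all punctures but one and record winding numbers around each puncture) is essentially the paper's reduction, and your rigidity argument for maps with a large invariant disk is close to the paper's Step I; but without an actual construction of a bounded-defect, Hofer-Lipschitz $\tau$, the proposal does not prove existence of even one member of $P_A$.
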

These $\rho_A$ can be seen as one of many ways to generalize the rotation number to dimension two.
The Lipschitz property $|\rho_A (\phi)| \leq c \| \phi \|$ implies the lower bound for the third case of Theorem~\ref{T:l_bound}:
\begin{eqnarray*}
 l_A (\alpha) &=& \inf \{ \|\phi\| \; \big| \; \phi (\mathcal{D}) = \mathcal{D} \; \text{and} \; 
 			[\traj_{\mathcal{D}} (\phi)] = \alpha)\} \geq \\
 			& \geq & \inf \left\{ \frac{|\rho_A(\phi)|}{c} \; \big| \; \phi (\mathcal{D}) = \mathcal{D} \; \text{and} \; 
 			[\traj_{\mathcal{D}} (\phi)] = \alpha)\right\} = \frac{|\alpha|}{c}.
\end{eqnarray*}
Additional properties and applications of invariants $\rho_A$ are discussed in Section~\ref{S:disc_quas}.

\bigskip

This paper is organized as follows. In order to show the upper bounds for $l_A$ as in Theorem~\ref{T:l_bound} we
describe explicit Hamiltonian isotopies that achieve the desired energy bounds. Details are provided in Section~\ref{S:up}.
In Section~\ref{S:qm} we construct $\rho_A$ for Theorem~\ref{T:quas} from the Calabi quasimorphism $\Ham(S^2) \to \RR$ described in ~\cite{En-Po:calqm}.
(Note that the conditions on $M$ in Theorem~\ref{T:quas} are equivalent to the statement that $M$ embeds to a sphere.)
Section~\ref{S:disc} discusses possible 
generalizations of $l_A$ and provides few applications of the quasimorphisms from Theorem~\ref{T:quas}.

\medskip

\emph{Acknowledgements:}
The author would like to thank D. Calegari, V. Humiliere, F. Le Roux and F. Zapolsky for useful discussions and comments.

\section{Upper bounds} \label{S:up}

In this section we show upper bounds for Theorem~\ref{T:l_bound}. We construct explicit Hamiltonian isotopies that translate a disk
along a given homology class and remain within the prescribed energy bound.

\subsection{Suppose $A = \Area{\mathcal{D}} \leq \Area (M) / 2$} \label{S:up-disp}
We assume first that $A < \Area (M) / 2$.

Consider the following Hamiltonian in $(\RR^2, \mathrm{d}x \wedge \mathrm{d}y)$. We pick two disjoint disks $D_1$ and $D_2$ of area $A$ 
in the plane, connect them by two narrow nonintersecting strips (`pipes') as described in Figure~\ref{F:swap1}. Let $H$ be an autonomous 
Hamiltonian which equals one in the area bounded by the two disks and the pipes, zero outside and linearly interpolated in between.
We apply to $H$ a $C_0$-small smoothing near the singular points.

\begin{figure}[!htbp]
\begin{center}
\includegraphics[width=0.7\textwidth]{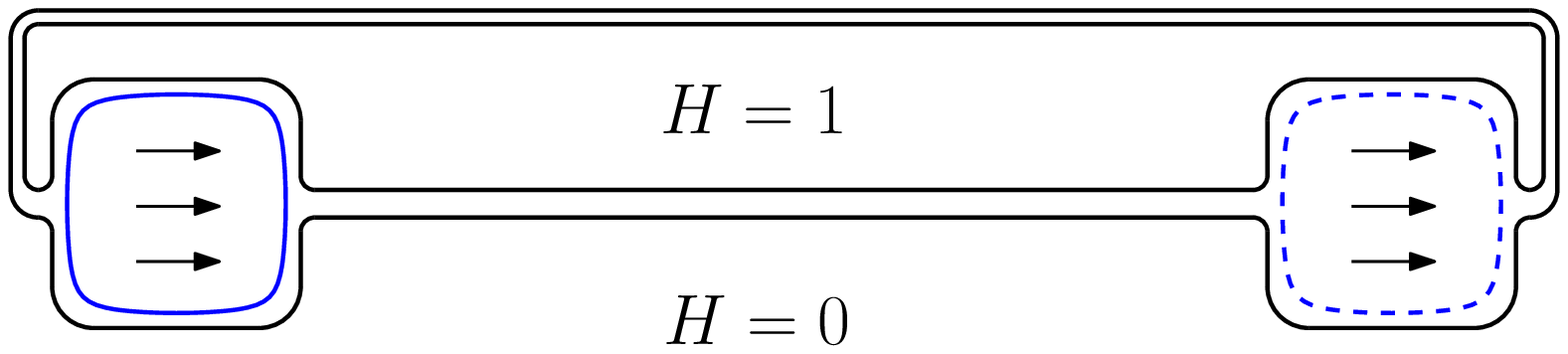}
\caption{}
\label{F:swap1}
\end{center}
\end{figure}

The flux of the Hamiltonian flow through a curve $\gamma$ equals to the difference of values of $H$ at the endpoints of 
$\gamma$. Hence if we choose $\gamma$ to be a cut going to the inner region (where $H = 1$) from the outside, the flux through $\gamma$ is one.
This implies that the resulting flow is relatively slow inside the disks and accelerates in the pipes where such a cut $\gamma$ can be very short. 
For appropriate choice of the
smoothing of $H$ the time-($A+\varepsilon$) map of the flow of $H$ will move $D_1$ to $D_2$. (In fact, it is possible to arrange that
the two disks will be swapped.) The Hofer norm is at most $\int_0^{A + \varepsilon} (\max H - \min H) \mathrm{d}t = A + \varepsilon$. 
$\varepsilon$ depends on the area of the pipes and the choice of smoothing of $H$ and can be made arbitrarily small.
Note that the energy cost of $(A+\varepsilon)$ is optimal since every Hamiltonian that displaces $D_1$ has energy 
$\geq A$ by the energy-capacity inequality.

We augment this construction as in Figure~\ref{F:swap2}: we choose the first pipe to go along an interval connecting $D_1$ with $D_2$ and
let the second pipe lie in a small neighborhood of the two disks and the first pipe.

\begin{figure}[!htbp]
\begin{center}
\includegraphics[width=0.65\textwidth]{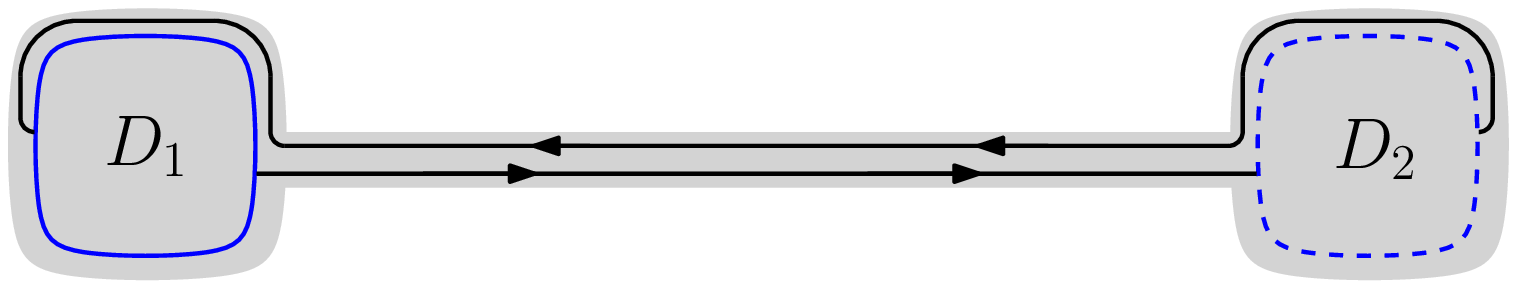}
\caption{}
\label{F:swap2}
\end{center}
\end{figure}

This way the Hamiltonian $H$ is supported in a small neighborhood of the two disks and an interval. This construction can be
embedded to any surface given two disjoint copies of a disk and a simple path connecting boundary points of the disks. This allows 
us to translate a disk along a simple path.

\medskip

Let $D_1, D_2$ be two disjoint copies of $\mathcal{D}$ in $M$ and let $\gamma_1$ be a simple path connecting $D_1$ to $D_2$
and $\gamma_2$ be a simple path connecting $D_2$ to $D_1$. $\gamma_1$ and $\gamma_2$ may intersect one another but not the disks. We apply the construction above 
in order to move $D_1$ to $D_2$ along $\gamma_1$ and then move back to $D_1$ along $\gamma_2$. The total energy cost of such process
equals $2 \Area(\mathcal{D}) + 2 \varepsilon$ and the result is a translation of the disk $D_1$ along a trajectory in class
$[\gamma_1 * \gamma_2]$. (By abuse of notation we extend $\gamma_1, \gamma_2$ inside the disks so that they have the same endpoints
and catenation makes sense.)

\begin{figure}[!htbp]
\begin{center}
\includegraphics[width=0.7\textwidth]{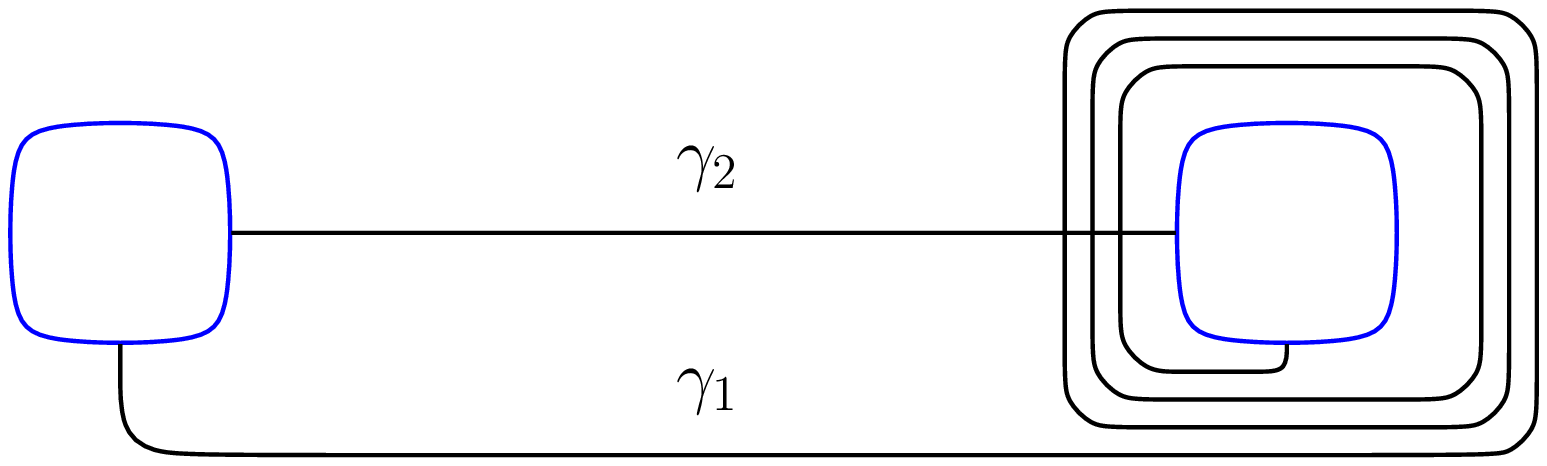}
\caption{}
\end{center}
\end{figure}

The upper bound follows from the following topological lemma:
\begin{lem} \label{l:loop2path}
  Let $M$ be a surface and $\alpha \in \pi_1 (M)$. Then there exist two simple curves $\gamma_1$, $\gamma_2$ with the same 
  endpoints such that $\alpha = [\gamma_1 * -\gamma_2]$. ($-\gamma_2$ denotes $\gamma_2$ with reversed orientation.)
\end{lem}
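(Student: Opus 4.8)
The plan is to represent the free homotopy class $\alpha$ by an immersed loop in general position and then "resolve" it into an embedded arc configuration. First I would pick a smooth loop $c$ representing $\alpha$ that is in general position, i.e.\ has only finitely many transverse double points and no triple points; by a small perturbation this costs nothing. Choose a basepoint $p$ on $c$ away from the self-intersections. The goal is to cut $c$ at $p$ and split the resulting based path into two \emph{simple} arcs $\gamma_1,\gamma_2$ sharing both endpoints, with $\alpha=[\gamma_1*(-\gamma_2)]$. The natural idea is: find an interior point $q$ of the loop so that the two sub-arcs of $c$ from $p$ to $q$ are each embedded; then set $\gamma_1$ to be one sub-arc, $\gamma_2$ the other (reversed), so that $\gamma_1*(-\gamma_2)$ traverses all of $c$ and hence is freely homotopic to $\alpha$.

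The mechanism to achieve this is an induction on the number $n$ of self-intersection points of $c$. If $n=0$ the loop is already simple: cut it at $p$ and at any other point $q$, obtaining two embedded arcs, and we are done. For the inductive step, pick a self-intersection point $x$ of $c$. Walking along $c$ from $p$, let $t_1<t_2$ be the two times at which $c$ passes through $x$. The sub-loop $c|_{[t_1,t_2]}$ is a closed curve based at $x$; I would perform surgery at $x$ by smoothing the crossing so as to split $c$ into this sub-loop $\delta$ together with the complementary loop $c'$ (based at $p$), which now has at most $n-1$ self-intersections. The freely homotopy class satisfies $\alpha=[c]=[\delta\cdot c']$ up to conjugation — but rather than tracking this algebraically, the cleaner route is to push the whole sub-loop $\delta$ along $c$ into a tiny embedded disk neighborhood: since $\delta$ sits inside a surface, it is freely homotopic to a loop supported in an arbitrarily small neighborhood of a point, and we may slide that neighborhood to a point on an embedded portion of $c'$. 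This reduces computing $\gamma_1,\gamma_2$ for $\alpha$ to the same problem for the simpler loop obtained by this reduction, after which the inductive hypothesis applies and one reconnects the pieces.

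The main obstacle is bookkeeping: after each surgery one must verify that (a) the curves produced remain \emph{simple} (embedded), not merely immersed, when the small-neighborhood pieces are spliced back in, and (b) the free homotopy class is genuinely preserved, i.e.\ the base point manipulation and the conjugation ambiguity inherent in free homotopy classes do not change $[\gamma_1*(-\gamma_2)]$. The first point is handled by always performing the splicing inside disjoint disk neighborhoods and using that an embedded arc stays embedded after an isotopy supported in a disk meeting it in a single sub-arc; the second is handled by noting that $[\gamma_1*(-\gamma_2)]$ is exactly the free homotopy class of the loop obtained by concatenating the two arcs, and each reduction step was, by construction, a free homotopy of that loop. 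An alternative, slicker approach — which avoids the induction entirely — is to invoke the classification of surfaces: realize $M$ as a quotient of a polygon (or as a subsurface of a closed surface with a standard handle decomposition), represent $\alpha$ by a word in the standard generators $a_i,b_i$ (and boundary/puncture loops), and exhibit $\gamma_1,\gamma_2$ explicitly as arcs read off from a normal form of that word; here the difficulty migrates to checking simplicity of the explicit arcs, which is routine but tedious. I would present the inductive argument as the main line, since it is self-contained and dimension-two-specific (it uses crucially that loops in a surface can be shrunk into disks and slid around), and the genus-dependence in Theorem~\ref{T:l_bound} does not enter here at all — the lemma holds for every surface.
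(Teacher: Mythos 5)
Your inductive reduction rests on a false step, and it is exactly the step that carries the homotopy-theoretic content. After smoothing a crossing $x$ you split $c$ into the sub-loop $\delta=c|_{[t_1,t_2]}$ and the complementary loop $c'$, and you then assert that ``since $\delta$ sits inside a surface, it is freely homotopic to a loop supported in an arbitrarily small neighborhood of a point.'' That is only true when $[\delta]$ is trivial in $\pi_1(M)$: a loop contained in an embedded disk is null-homotopic, so an essential $\delta$ (which is the generic situation --- think of a figure-eight on a torus, where $\delta$ is one of the two essential leaves) cannot be pushed into a small disk by any free homotopy. Consequently your reduction replaces the class $\alpha$, which up to conjugation is $[\delta][c']$ when based at $x$, by $[c']$: the factor $[\delta]$ is simply discarded. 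The induction therefore produces simple arcs $\gamma_1,\gamma_2$ for the wrong element, and the concluding phrase ``reconnects the pieces'' is not a construction --- there is no indicated way to splice $\delta$ back in, keep both arcs embedded, and restore the lost factor. (The crossing-smoothing surgery is inherently a tool that forgets homotopy data; it cannot be the engine of this lemma without an additional idea. The alternative route via a polygonal normal form is only sketched and the simplicity check, which is the whole difficulty, is not carried out.)

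The paper avoids surgery altogether and never cuts the loop: one traverses $\gamma$ once with a ``pen'' starting at $p$, and each time the pen is about to hit the already-drawn path, the previously drawn strands are stretched and pushed into a small neighborhood in front of the pen (an isotopy of the partial path, see Figure~\ref{F:pathlemma}), so that the drawn curve $\gamma_1$ stays embedded while remaining homotopic \emph{rel endpoints} to the arc of $\gamma$ traversed so far. One stops at a point $q$ for which the remaining arc $[q,p]\subset\gamma$ is already simple and takes that arc as $\gamma_2$. Because $\gamma_1$ is homotopic rel endpoints to $[p,q]\subset\gamma$, the concatenation of the two arcs represents $[\gamma]=\alpha$ by construction --- the class is preserved at every stage precisely because nothing is ever excised. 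If you want to keep an inductive flavor, the induction should be on the number of crossings the pen still has to negotiate, with the invariant ``the drawn path is embedded and homotopic rel endpoints to the traversed arc,'' rather than on surgered, shorter loops.
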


Indeed, pick a class $\alpha \in \pi_1 (M)$. By this lemma there exist two points $p, q \in M$ and two simple curves 
$\gamma_1, \gamma_2$ connecting them so that $[\gamma_1 * \gamma_2] = \alpha$. We replace $p$ and $q$ by small disks
$D_1, D_2$ which intersect both curves only in small intervals near the endpoints. Pick a symplectic form $\omega'$ on $M$
so that both disks have area $A$ and the total area is $\int_M \omega' = \int_M \omega$. $(M, \omega')$ is symplectomorphic to
$(M, \omega)$ by Moser's argument. We construct a Hamiltonian 
in $(M, \omega')$: translate $D_1$ along $\gamma_1$ to $D_2$ and then back along $\gamma_2$. The resulting deformation $\phi$ satisfies
\[
  \traj_{D_1} (\phi) = [\gamma_1 * \gamma_2] = \alpha \in \pi_1 (M)
\]
at the cost of $2 \Area(\mathcal{D}) + 2 \varepsilon$, where $\varepsilon$ is arbitrarily small.

\medskip

Assume that $A = \Area (M) / 2$ and fix $\alpha = [\gamma_1 * \gamma_2] \in \pi_1(M)$.
Pick a smaller disk $D \subset \mathcal{D}$, suppose that $\Area (D) = A - \epsilon$.
We apply the previous argument and construct $\phi \in \Ham(M)$ which translates $D$ along $\alpha$.
Then we deform $\phi (\mathcal{D})$ back to $\mathcal{D}$ by another Hamiltonian isotopy $f$ which fixes the smaller disk $D$.
Note that the annulus $\phi (\mathcal{D} \setminus D)$ is spread along $\gamma_1 \cup \gamma_2$ and its shape does not depend on the choice of $D$. 
This implies that $f$ can be chosen with $\|f\| \leq c \epsilon$ where $c$ depends on $\gamma_1, \gamma_2$ but not on $\epsilon$.
Therefore 
\[
  l_A (\alpha) \leq \| \phi \| + \| f \| \leq 2A + 2 \varepsilon + c \epsilon.
\]
The result follows by letting $\varepsilon, \epsilon \to 0$.

\begin{remnonum}
  Note that we proved the upper bound for $\alpha \in \pi_1 (M)$ which, in particular, implies the same bound for the homological spectrum.
\end{remnonum}

\medskip

\begin{proof}[Proof of lemma]
  Pick $\alpha \in \pi_1 (M)$. Let $\gamma$ be a loop representing $\alpha$ which has finite number of 
  transverse intersections. We deform $\gamma$ so that it can be cut into a pair of simple curves. Pick arbitrary point $p \in \gamma$
  which will be a starting point of $\gamma_1$. We draw $\gamma_1$ by traversing $\gamma$ starting from $p$. We prevent self-intersections 
  of $\gamma_1$ as follows. Once we arrive to a self-crossing we stretch the previously drawn path an push it in a 
  small neighborhood in front of
  the pen (see Figure~\ref{F:pathlemma}). After passing several crossing we will have a number of strings pushed in this manner.
  During of this process we construct a simple path which is homotopic to the corresponding 
  arc of $\gamma$ relative the endpoints. We stop the process when we arrive to a point $q \in \gamma$ such that the arc
  $[q, p] \subset \gamma$ is simple. Set $\gamma_2 = [q, p] \subset \gamma$. 
  Lemma follows since $\gamma_1$ is homotopic to $[p, q] \subset \gamma$ relative the endpoints.
  
\begin{figure}[!htbp]
\begin{center}
\includegraphics[width=\textwidth]{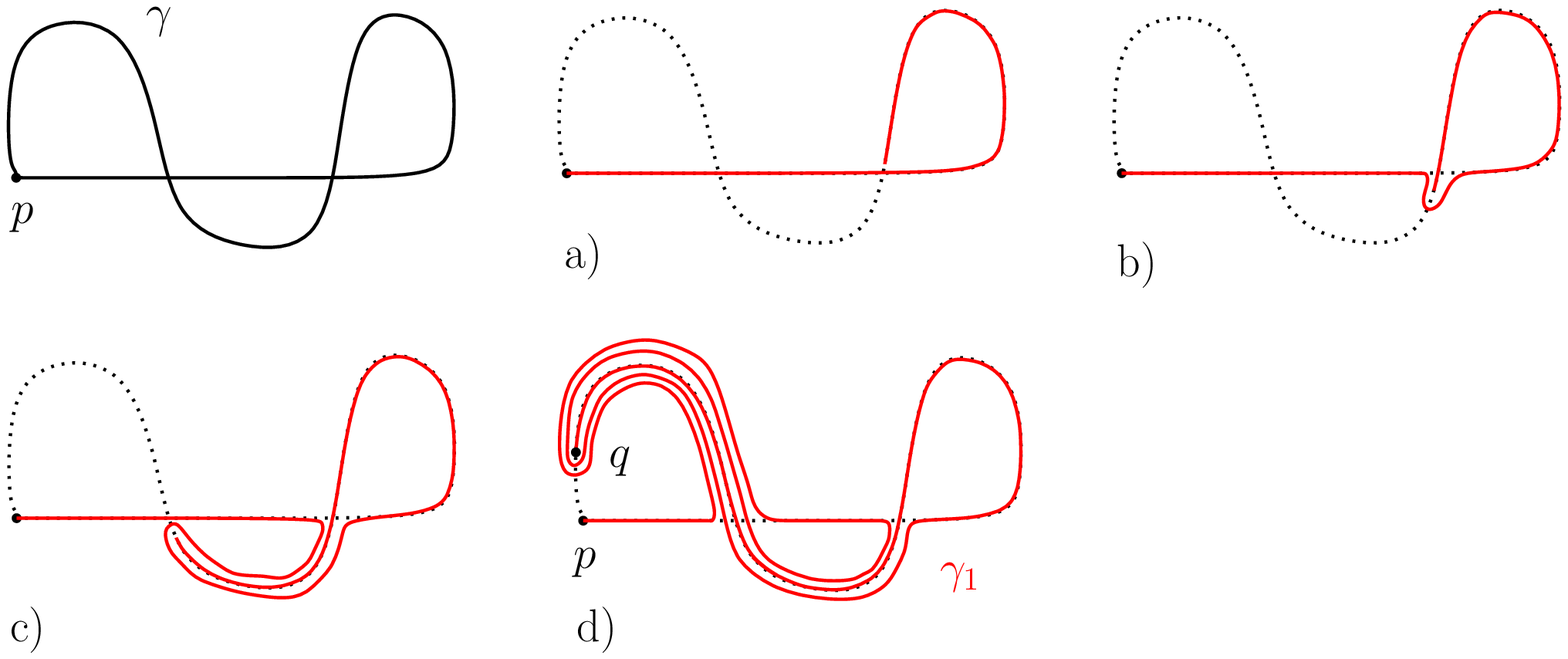}
\caption{}
\label{F:pathlemma}
\end{center}
\end{figure}

\end{proof}

\subsection{Arbitrary $\mathcal{D}$}. 
We show first that one can translate a disk along a simple loop using energy $\Area (\mathcal{D}) + \varepsilon$. 
Then we consider separately the cases $\genus(M) = 0$ and $\genus (M) > 0$.

Consider the following Hamiltonian in $(\RR^2, \mathrm{d}x \wedge \mathrm{d}y)$. We pick a disk $D \subset \RR^2$ of area $A$ 
and connect two boundary points of the disk by a circular pipe. Let $H$ be an autonomous 
Hamiltonian which equals one in the inner region, zero outside and linearly interpolated in between.
We apply to $H$ a $C_0$-small smoothing near the singular points (see Figure~\ref{F:loop}).

\begin{figure}[!htbp]
\begin{center}
\includegraphics[width=0.4\textwidth]{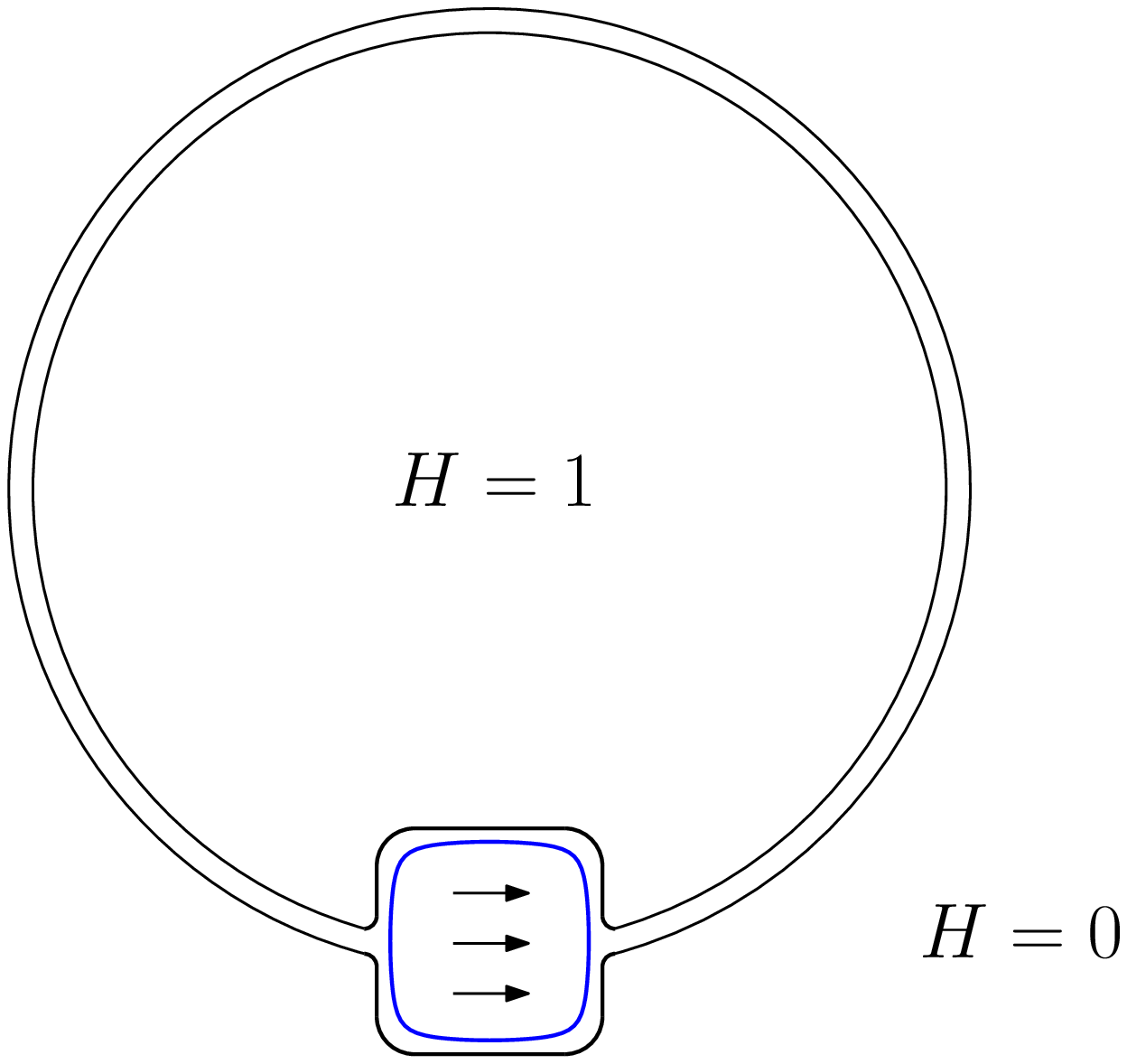}
\caption{}
\label{F:loop}
\end{center}
\end{figure}

The Hamiltonian flow of $H$ is relatively slow inside the disk $D$ and fast within the pipe. For an appropriate choice of a
smoothing of $H$ there exists $\varepsilon > 0$ such that $D$ is a fixed set of the time-$(A+\varepsilon)$ map of the flow of $H$.
The energy is at most $A+\varepsilon$ and $\varepsilon$ depends on the area of the pipe and a choice of smoothing and can be made arbitrarily small.

We cut $H$ off inside the circle so that it is supported in a small neighborhood of the disk and the pipe. Note that this cutoff
does not affect the flow of $H$ in the disk $D$ and the pipe. This construction can be copied to any symplectic surface
given a disk and a simple curve which connects boundary points of the disk and does not intersect it away from the endpoints.
This implies the following result. 
\begin{lem}
  Suppose that $\alpha \neq 0$ is represented by a simple loop. Then $l_A (\alpha) = A$.
\end{lem}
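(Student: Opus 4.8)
The plan is to prove the two inequalities $l_A(\alpha)\ge A$ and $l_A(\alpha)\le A$, which together give the asserted equality.

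The lower bound requires nothing new. It was already recorded in the introduction that $l_A(\beta)\ge A$ for \emph{every} nonzero class $\beta$: any $\phi\in\Ham(M)$ with $\traj_{\mathcal D}(\phi)\neq 0$ lifts to a Hamiltonian diffeomorphism of the universal cover that displaces a lift of $\mathcal D$, so $\|\phi\|\ge A$ by the energy--capacity inequality. Since a simple loop representing $\alpha\neq 0$ is in particular a nonzero class, this yields $l_A(\alpha)\ge A$.

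For the upper bound I would use the construction described just above, applied to the loop $\alpha$ itself. Represent $\alpha$ by an embedded loop and isotope it so that it meets a disk $\mathcal D$ of area $A$ in a single subarc; let $\beta\subset M\setminus\operatorname{int}\mathcal D$ be the complementary embedded subarc of $\alpha$, joining two boundary points $p,q$ of $\mathcal D$ (such a configuration exists because $A<\Area(M)$, so there is room in the complement of $\mathcal D$). Take the pipe to be a thin neighbourhood of $\beta$ attached to $\mathcal D$ along short arcs of $\partial\mathcal D$ near $p$ and $q$, and build the autonomous Hamiltonian $H$ exactly as above: equal to $1$ on the inner region, $0$ outside a small neighbourhood of $\mathcal D\cup\text{pipe}$, linearly interpolated in between, with a $C_0$-small smoothing near the corners and cut off deep inside $\mathcal D$. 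By the discussion preceding the Lemma, one can choose the smoothing and a small $\varepsilon>0$ so that $\phi:=\phi_H^{A+\varepsilon}$ fixes $\mathcal D$ as a set, and then $\|\phi\|\le\int_0^{A+\varepsilon}(\max H-\min H)\,\dd t=A+\varepsilon$. The level sets of $H$ inside the pipe region are loops freely homotopic to the core of the annulus $\mathcal D\cup\text{pipe}$, and this core is freely homotopic to $\alpha$ (it runs through $\mathcal D$ along a chord $p\to q$ and back through the pipe along $\beta$, which is homotopic rel endpoints to the arc of $\alpha$ inside $\mathcal D$); hence $\traj_{\mathcal D}(\phi)=\alpha$. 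Letting $\varepsilon\to 0$ gives $l_A(\alpha)\le A$, and combining with the lower bound finishes the proof.

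The one point that needs care --- and which I expect to be the main obstacle --- is the claim that the smoothing and the time $A+\varepsilon$ can be tuned so that $\mathcal D$ returns to itself after making \emph{exactly one} turn around the pipe, so that the trajectory class is $\alpha$ and not the trivial class or an iterate of $\alpha$. This is governed by the flux of the flow: a cut from the outside into the region $\{H=1\}$ has flux $1$, so in time $t$ an area $t$ is carried across any such cut; taking the cut across the pipe and using $\Area(\mathcal D)=A$ shows that time $\approx A$ transports precisely one copy of $\mathcal D$ around the loop, while the smoothing is chosen so that at that instant the disk is mapped onto itself rather than onto an overlapping position. This quantitative bookkeeping is exactly what is carried out in the paragraphs before the Lemma, so I would simply invoke it.
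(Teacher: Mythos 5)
Your proof is correct and follows essentially the same route as the paper: the lower bound is the energy--capacity estimate already recorded in the introduction, and the upper bound is the circular-pipe Hamiltonian constructed just before the lemma, applied to a configuration where the disk of area $A$ sits on the simple loop. The only cosmetic difference is that the paper arranges this configuration by deforming $\omega$ via Moser's argument (keeping the loop and a topological disk fixed and adjusting areas) rather than by isotoping the loop around a prechosen area-$A$ disk, but this is the same idea in different bookkeeping.
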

\begin{proof}
  Let $\gamma$ be a simple loop representing $\alpha$. As in the previous subsection, we deform
  $\omega$ so that there exists a disk of area $A$ whose boundary points are connected by an arc of $\gamma$ and this arc does
  not intersect the disk except for the endpoints. An application of the Hamiltonian described above implies $l_A (\alpha) \leq A + \varepsilon$.
  The lemma follows by letting $\varepsilon \to 0$ and from the fact that $l_A (\alpha) \geq A$.
\end{proof}

\begin{cor}\label{C:w_length}
  Let $S \subseteq H_1 (M; \ZZ)$ be the set of classes represented by simple loops. It is easy to see that $S$ generates $H_1 (M; \ZZ)$.
  Denote by $\| \cdot \|_S$ the norm on $H_1 (M; \ZZ)$ given by word length with respect to $S$. The triangle inequality implies
  $l_A (\alpha) \leq A \cdot \| \alpha \|_S$. The same argument holds also for the homotopical length spectrum. 
\end{cor}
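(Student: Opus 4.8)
The plan is to obtain this as an immediate consequence of the preceding lemma ($l_A(\beta) = A$ whenever $\beta \neq 0$ is represented by a simple loop) together with the triangle inequality for $l_A$ already recorded in the introduction. The only steps that call for comment are: (i) that $S$ generates $H_1(M;\ZZ)$, so that $\|\cdot\|_S$ is finite everywhere; (ii) that $S$ is symmetric, $S = -S$; and (iii) the bookkeeping of iterating the triangle inequality a number of times equal to the word length.

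First I would dispose of (i) and (ii). Since $M$ is a surface of finite type, it is the interior of a compact surface with nonempty boundary, so $H_1(M;\ZZ)$ is free of finite rank and admits a standard generating set consisting of a symplectic basis $a_1, b_1, \dots, a_g, b_g$ for the genus together with loops $c_1, \dots, c_k$ encircling the ends; each of these is the class of an embedded circle, hence lies in $S$. Therefore $S$ generates $H_1(M;\ZZ)$ and $\|\cdot\|_S$ takes finite values. For (ii), reversing the orientation of an embedded loop again yields an embedded loop, so $-s \in S$ for every $s \in S$; consequently, for any $\alpha \in H_1(M;\ZZ)$ with $n := \|\alpha\|_S$ we may write $\alpha = s_1 + \dots + s_n$ with each $s_i \in S$, absorbing the signs.

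Next I would feed this decomposition into the lemma and the triangle inequality. We may assume $\alpha \neq 0$ (the case $\alpha = 0$ being trivial), so each $s_i \neq 0$ by minimality of $n$, whence $l_A(s_i) = A$. Applying the triangle inequality $n - 1$ times gives
\[
  l_A(\alpha) \;\le\; \sum_{i=1}^{n} l_A(s_i) \;=\; nA \;=\; A \cdot \|\alpha\|_S .
\]
Unwound, this is the expected construction: given $\varepsilon > 0$, pick for each $i$ a map $\phi_i \in \Ham(M)$ with $\phi_i(\mathcal{D}) = \mathcal{D}$, $[\traj_{\mathcal{D}}(\phi_i)] = s_i$ and $\|\phi_i\| \le A + \varepsilon$ (possible by the lemma, using that $l_A$ does not depend on the choice of disk of area $A$), and set $\phi = \phi_n \circ \dots \circ \phi_1$; then $\phi(\mathcal{D}) = \mathcal{D}$, the homological trajectory of $\phi$ is $\sum_i s_i = \alpha$, and $\|\phi\| \le \sum_i \|\phi_i\| \le n(A + \varepsilon)$, after which one lets $\varepsilon \to 0$. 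The homotopical statement is obtained by running the same argument inside $\pi_1(M)$: for an open surface $\pi_1(M)$ is free and its standard free generators are simple loops, this generating set is closed under inversion, and the triangle inequality $l_A([a] * [b]) \le l_A([a]) + l_A([b])$ holds as stated, so $l_A(\alpha) \le A \cdot \|\alpha\|_{S'}$, where $\|\cdot\|_{S'}$ is the word length on $\pi_1(M)$ with respect to classes of simple loops.

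I do not expect a real obstacle here; the statement is a genuine corollary. The one point that deserves a second's care is that composing the $\phi_i$ truly adds their (homological) trajectory classes --- but this is exactly the triangle-inequality property granted in the introduction, and it holds because each $\phi_i$ preserves $\mathcal{D}$ and $\mathcal{D}$ is simply connected, so that the trajectory of a point of $\mathcal{D}$ under the concatenated isotopy equals, up to a correcting arc running inside $\mathcal{D}$, the concatenation of the individual trajectories.
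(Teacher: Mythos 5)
Your proposal is correct and is exactly the paper's argument: the corollary is deduced, just as you do, from the preceding lemma ($l_A = A$ on nonzero classes of simple loops) by iterating the triangle inequality along a minimal word in the symmetric generating set $S$, with the same remark carrying over verbatim to the homotopical spectrum. The extra bookkeeping you supply (symmetry of $S$, finiteness of $\|\cdot\|_S$, composing the maps $\phi_i$) only makes explicit what the paper leaves implicit.
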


Given any surface $M$,  $H_1 (M; \ZZ)$ admits a basis $B$ represented by simple loops. By the triangle inequality, 
$l_A (\alpha) \leq A \cdot \| \alpha \|_B$ where $\| \cdot \|_B$ is the word length norm with respect to $B$.
$\| \cdot \|_B$ is a homogeneous norm on $H_1(M; \ZZ)$. This shows the upper bound for the third case in Theorem~\ref{T:l_bound}.
A more careful argument may provide better constants. For example, in the case of an annulus $M$ the bound can be improved to 
\[ 
  l_A (n \cdot \text{generator}) \leq (2 A - \Area(M)) \cdot |n| + \Area(M)
\]
(see ~\cite{Kh:disk-an}).

\medskip

Suppose that $\genus(M) > 0$. The upper bound for Theorem~\ref{T:l_bound} follows from the lemma below and Corollary~\ref{C:w_length}.

\begin{lem}
  Let $M$ be a surface with positive genus. Then every homology class in $H_1 (M; \ZZ)$ can be represented by a sum of two simple loops.
\end{lem}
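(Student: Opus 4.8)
The plan is to reduce an arbitrary homology class $\alpha \in H_1(M;\ZZ)$ to a sum of two classes, each representable by a \emph{simple} loop, and then invoke Corollary~\ref{C:w_length}. First I would fix a standard handle-body picture of $M$: since $M$ has positive genus, it contains an embedded once-punctured (or closed) genus-one subsurface, and hence a pair of disjoint simple closed curves $a,b$ with algebraic intersection number one. More generally, choose a symplectic-type basis of $H_1(M;\ZZ)$ of the form $a_1,b_1,\dots,a_g,b_g$ (the handle generators, each a simple loop) together with loops $c_1,\dots,c_k$ around the punctures/boundary components (also simple loops), so every class is an integer combination of these. The reduction I want is: modulo the subgroup generated by the $c_j$'s, any class is a sum of two simple-loop classes; and the $c_j$-part can be absorbed because each $c_j$ is itself simple.

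The key step is the genus-one case. Write $\alpha \equiv p\,a_1 + q\,b_1 \pmod{\text{rest}}$ after splitting off the other generators. On a genus-one handle, the class $p\,a + q\,b$ is primitive precisely when $\gcd(p,q)=1$, and in that case it is represented by a single embedded curve (the familiar $(p,q)$-curve on the torus, which survives embedding into $M$ since the handle embeds). For non-primitive classes $p\,a+q\,b = d\,(p'a+q'b)$ with $\gcd(p',q')=1$ and $d\geq 2$, I would instead split $p\,a+q\,b = (p'a+q'b) + \big((d-1)p'a + (d-1)q'b\big)$ — no, that is not a sum of two \emph{simple} classes unless $d-1\le 1$. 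Better: use the two handle generators more symmetrically. Given $p\,a+q\,b$, pick integers so that this decomposes as $(x_1 a + y_1 b) + (x_2 a + y_2 b)$ with $x_1+x_2=p$, $y_1+y_2=q$, and with $\gcd(x_1,y_1)=\gcd(x_2,y_2)=1$; such a splitting always exists by an elementary number-theoretic argument (for instance take $x_1 = p-1, y_1 = q$ adjusted so the first summand is primitive, then check the second is primitive too by a parity/coprimality bookkeeping, falling back to a $3$-term estimate only if needed — but in fact two terms suffice because on $\ZZ^2$ every vector is a sum of two primitive vectors). Each primitive class on the handle is a simple loop, so $\alpha$ restricted to the handle is a sum of two simple loops.

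To finish, I incorporate the remaining generators. The contributions from the other handles $a_i,b_i$ ($i\ge 2$) and from the puncture loops $c_j$ can be grouped onto the two simple curves produced above by a connect-sum / band-sum operation: given a simple curve $\delta$ already built and a disjoint simple loop $e$ (one of the remaining generators), the class $[\delta]+[e]$ is again represented by a simple loop obtained by resolving $\delta \cup e$ along a connecting band, provided $\delta$ and $e$ are disjoint and the band misses everything else — which can be arranged because on a surface one has enough room to push curves off one another. Iterating, I fold all generators into just two simple loops whose classes sum to $\alpha$.

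The main obstacle I anticipate is the bookkeeping in the last paragraph: ensuring that the successive band-sums really do stay embedded and realize exactly the intended homology class, rather than that class plus some unwanted commutator correction — at the level of $H_1$ the corrections vanish, so the honest content is just that a band-sum of two disjoint simple loops is a simple loop in the expected homology class, which is standard. The genus-one primitive-class fact (a $(p,q)$-curve is embedded) is classical and transfers to $M$ since the handle embeds symplectically (or just topologically, which is all we need here).
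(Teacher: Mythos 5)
Your overall strategy (split the class into two pieces, use primitivity to get embedded representatives, and band-sum across handles) is in the right spirit, but the last paragraph — the iterative ``fold-in'' of the remaining generators — contains a genuine gap, in fact two. First, disjointness fails: suppose some handle $i\ge 2$ carries a class $m_i a_i + n_i b_i$ with both coefficients nonzero. Once you have band-summed copies of $a_i$ into your growing curve $\delta$, the algebraic intersection number $\langle b_i, [\delta]\rangle$ is $\pm m_i\neq 0$, so \emph{every} simple loop in the class $b_i$ must intersect $\delta$; there is no disjoint representative to band-sum with, and your procedure cannot continue. (This could be repaired by routing all the $a_i$'s to one of the two loops and all the $b_i$'s to the other, but the proposal does not say this, and it then needs a separate check for each loop.) Second, the statement you call standard — that resolving $\delta\cup e$ along a connecting band yields a simple loop in the class $[\delta]+[e]$ — is false in general. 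A band sum is embedded, but it represents $[\delta]+[e]$ only if the band is attached coherently with the orientations; otherwise it represents $[\delta]-[e]$, and a coherent band need not exist. The basic counterexample is exactly the configuration your iteration produces: two disjoint parallel copies of $a_i$ with the same orientation admit no coherent band, since $2a_i$ is not represented by any embedded loop (a nonseparating simple loop has indivisible class, and $2a_i$ is not a combination of puncture classes). So realizing $m_i a_i$ with $|m_i|\ge 2$ by folding in one copy at a time requires, at each stage, an argument that a coherent band to the correct side exists, and this depends delicately on what the running curve already is and on the order of the folds (if the running class ever becomes a proper multiple, the next fold is impossible). None of this bookkeeping is supplied, and it is precisely where the content of the lemma lies.

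For comparison, the paper avoids both issues by doing the ``two primitive pieces'' trick inside \emph{every} handle, not just the first: it decomposes $M$ into disjoint punctured tori $T_1\#\cdots\# T_l$, and in each $T_i$ writes the local class as a sum of two classes each having some coefficient equal to $1$ (hence represented by a simple loop in $T_i$). Each of the two final loops then meets each $T_i$ in a single simple curve, all these curves are pairwise disjoint because the $T_i$ are, and the only band sums needed are between curves lying in different summands, where a coherent band is easily found. If you reorganize your argument this way — splitting the contribution of every handle (and the puncture classes) between the two loops so that each loop picks up one embedded curve per summand — your approach becomes essentially the paper's proof.
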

\begin{proof}
  \underline{Step 1}: Suppose $M = S^1 \times S^1$ is a torus. $\alpha = [\{pt\} \times S^1]$ and $\beta = [S^1 \times \{pt\}]$
  generate $H_1 (M; \ZZ) = \ZZ<\alpha, \beta>$. Any class $n \alpha + m \beta$ can be decomposed into the sum 
  \[
	n \alpha + m \beta = \big((n -1)\alpha + \beta\big) + \big( \alpha + (m-1) \beta \big)
  \]
  where each summand is represented by a simple loop (see Figure~\ref{F:torus}).
  \begin{figure}[!htbp]
  \begin{center}
  \includegraphics[width=0.7\textwidth]{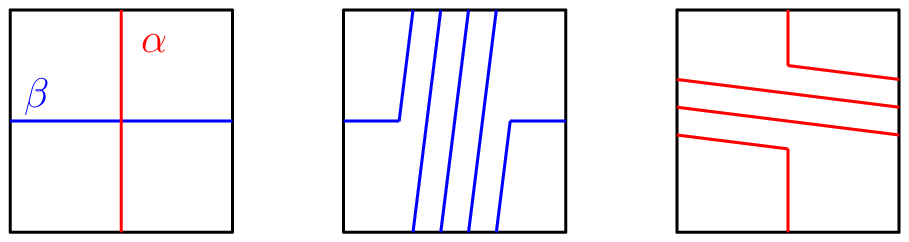}
  \caption{}
  \label{F:torus}
  \end{center}
  \end{figure}

  \medskip
  
  \underline{Step 2}: Suppose $M = S^1 \times S^1 \setminus \{p_1, \ldots, p_k\}$ is a punctured torus. 
  Without loss of generality we assume that all punctures lie on a horizontal meridian. $H_1 (M; \ZZ)$ admits a basis
  of $k$ vertical meridians $\alpha_1, \ldots, \alpha_k$ and a horizontal meridian $\beta$ (see Figure~\ref{F:ptorus}).
  Then every 
  \[
	\alpha = m_1 \alpha_1 + \ldots + m_k \alpha_k + n \beta \in H_1 (M; \ZZ)
  \]
  admits a decomposition
  \[
	\alpha = \big((m_1-1) \alpha_1 + \ldots + m_k \alpha_k + \beta\big) + \big(\alpha_1 + (m-1) \beta \big)
  \]
  where each summand is represented by a simple loop.
  
  \begin{figure}[!htbp]
  \begin{center}
  \includegraphics[width=0.72\textwidth]{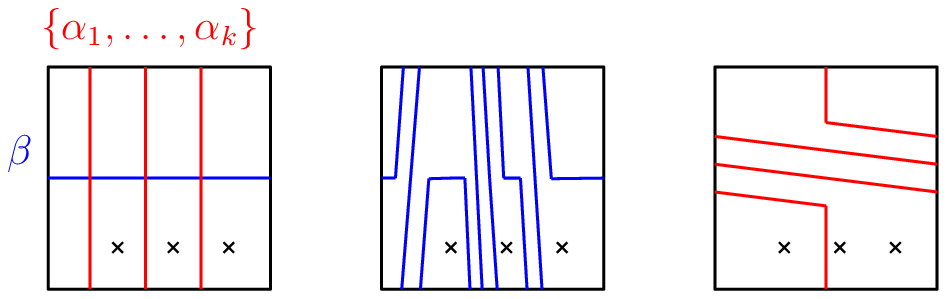}
  \caption{}
  \label{F:ptorus}
  \end{center}
  \end{figure}

  \medskip
  
  \underline{Step 3}: Suppose $\genus(M) > 1$. $M$ can be decomposed into a connected sum of punctured tori 
  \[
	M = T_1 \# \ldots \# T_l, \quad H_1 (M; \ZZ) = H_1 (T_1; \ZZ) + \ldots + H_1 (T_l; \ZZ).
  \]
  We identify $H_1 (T_i; \ZZ)$ with their image in $H_1 (M; \ZZ)$ by abuse of notation.
  \begin{figure}[!htbp]
  \begin{center}
  \includegraphics[width=0.5\textwidth,natwidth=693,natheight=426]{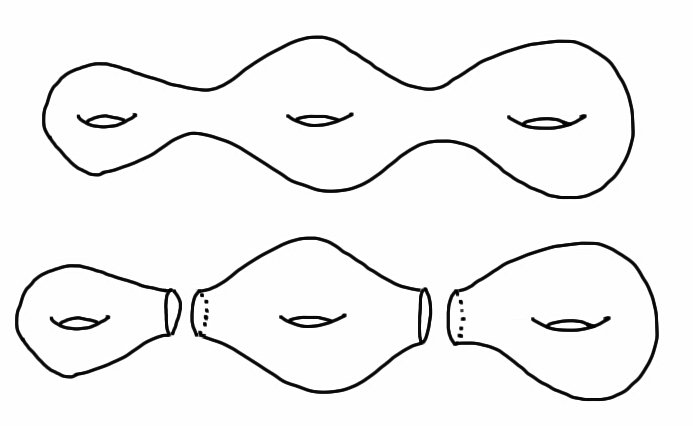}
  \label{F:cut}
  \end{center}
  \end{figure}
  
  Pick $\alpha \in H_1 (M; \ZZ)$. There exist $\alpha_i \in H_1 (T_i; \ZZ)$ such that $\alpha = \sum \alpha_i$.
  By the previous step we choose homology classes $\beta_i, \beta'_i \in H_1 (T_i; \ZZ)$
  represented by simple loops such that $\alpha_i = \beta_i + \beta'_i$.
  Now
  \[
	\alpha = \sum \alpha_i = \sum (\beta_i + \beta'_i) = \sum \beta_i + \sum \beta'_i.
  \]
  Both $\sum \beta_i$ and $\sum \beta'_i$ are represented by simple loops: consider a connected sum of the corresponding simple loops in each $T_i$.
  The result is a simple loop in $M$ since $T_i$ are disjoint.
\end{proof}

\section{Construction of quasimorphisms $\rho_A$}\label{S:qm}

Let $G$ be a group. A function $r : G \to \RR$ is called a \emph{quasimorphism} if there exists 
a constant $R$ such that $|r(fg) - r(f) - r(g)| \leq R$ for all $f, g \in G$. 
$R$ is called the \emph{defect} of $r$. The quasimorphism $r$ 
is \emph{homogeneous} if it satisfies $r(g^m) = mr(g)$ for all $g \in G$ and $m \in \ZZ$.
Any homogeneous quasimorphism satisfies $r(fg) = r(f) + r(g)$ for commuting elements $f, g$ and is invariant under conjugations.

\medskip

We construct quasimorphisms $\rho_A$ described in Theorem~\ref{T:quas}. Recall that $M$ is a surface of finite type of genus zero.
$M$ has finite area, so without loss of generality we rescale $\omega$ to get $\Area(M) = 1$. Such $M$ is symplectomorphic to the interior of 
a $k$-times punctured disk $U \setminus \{p_1, \ldots , p_k\}$ where $k$ is the rank of $H_1 (M; \ZZ)$ and $\Area (U) = 1$. 

\begin{prop} \label{P:quas}
  Let $(\mathbb{A}, \omega)$ be an annulus (disk with one puncture). Suppose that $\Area(\mathbb{A}) = 1$. 
  Pick $\frac{1}{2} < A < 1$ and choose a generator $\alpha$ for $H_1 (M, \ZZ)$.
  There exists a family $P_A = \{\rho_A\}$ of invariants $\rho_A : \Ham (\mathbb{A}) \to \RR$
  that satisfy:
  \begin{itemize}
	\item
	  $\rho_A$ is a homogeneous quasimorphism,
	\item
	  $\rho_A$ is Hofer-Lipschitz and $C_0$-continuous,
	\item
	  Suppose that $\phi \in \Ham(\mathbb{A})$ has an invariant disk $D$ of $\Area(D) \geq A$. Then 
	  $[\traj_{D} (\phi)] = \rho_A (\phi) \cdot \alpha$.
  \end{itemize}
  $P_A$ contains continuum of such quasimorphisms that are linearly independent.
\end{prop}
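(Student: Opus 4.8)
The plan is to build $\rho_A$ from the Entov--Polterovich Calabi quasimorphism $\mu : \widetilde{\Ham}(S^2) \to \RR$ by embedding the annulus $\mathbb{A}$ into the sphere and using the fact that an element of $\Ham(\mathbb{A})$ admits a preferred lift to $\widetilde{\Ham}(S^2)$ once we know its trajectory data. Concretely, compactify $\mathbb{A}$ to $S^2$ of area $1$ by capping the two ends with small disks of areas $s$ and $1-A'$ for suitable parameters; a compactly supported $\phi \in \Ham(\mathbb{A})$ extends by the identity to $\bar\phi \in \Ham(S^2)$. The group $\Ham(S^2)$ is simply connected, so $\bar\phi$ has a canonical lift to $\widetilde{\Ham}(S^2)$ and we may set $\rho_A(\phi) := \frac{1}{1-A} \cdot \mu(\widetilde{\bar\phi})$, up to a normalization and an affine correction chosen so that the third (trajectory) property comes out with the right constant. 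Since $\mu$ is a homogeneous quasimorphism on $\widetilde{\Ham}(S^2)$ and $\phi \mapsto \widetilde{\bar\phi}$ is a homomorphism, $\rho_A$ is automatically a homogeneous quasimorphism; Hofer-Lipschitz continuity is inherited from the Lipschitz property of $\mu$ (with the embedding $\mathbb{A} \hookrightarrow S^2$ being Hofer-isometric on compactly supported maps), and $C^0$-continuity is inherited from the $C^0$-continuity of $\mu$ established by Entov--Polterovich--Py.

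The substantive step is the trajectory property: if $\phi$ has an invariant disk $D$ with $\Area(D) \ge A > 1/2$, then $D$ is a large disk on $S^2$ — its complement has area $< 1/2$ — and $\mu$ of a Hamiltonian that preserves such a disk is controlled. Here I would use the standard dichotomy for the Calabi quasimorphism on $S^2$: for a displaceable (hence "small", area $<1/2$) subset, $\mu$ restricted to Hamiltonians supported there equals the Calabi invariant; applying this to $S^2 \setminus D$ and the generating function of $\phi$ (rotated so it is supported away from $D$, picking up exactly the winding of the trajectory), one reads off that $\mu(\widetilde{\bar\phi})$ is, up to the normalizing constant, the integer $n$ with $[\traj_D(\phi)] = n\alpha$. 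This is essentially the computation already carried out in \cite{Kh:disk-an} for the annulus, so I would cite and adapt it rather than redo it.

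Finally, to produce a continuum of linearly independent such $\rho_A$: the construction depends on a parameter, namely the areas of the two capping disks (equivalently, where inside the allowed window $(1/2,1)$ we place the "large disk" threshold, and how we distribute the remaining area between the two caps). Varying this parameter gives a one-parameter family $\rho_A^{(t)}$, and linear independence follows by evaluating on an explicit family of test maps — e.g. Hamiltonian rotations supported in sub-annuli of various areas, on which the different $\rho_A^{(t)}$ take values given by distinct (affine-)linear functions of the sub-annulus area, so no finite linear combination can vanish identically. The main obstacle I anticipate is bookkeeping the normalizations: making sure the affine correction and the scaling factor are chosen consistently so that (i) the trajectory property reads off exactly $n\alpha$ with coefficient $1$, (ii) homogeneity is preserved (an additive constant would break $\rho_A(\mathrm{id}) = 0$ unless it vanishes), and (iii) the Lipschitz constant is uniform across the family — and then checking that the resulting linear independence argument is insensitive to these choices.
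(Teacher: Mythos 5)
There is a genuine gap, and it is in the core of the construction. You propose to pull back a \emph{single} Entov--Polterovich quasimorphism $\mu$ along one compactification $\mathbb{A}\hookrightarrow S^2$ and fix things with a normalization and an ``affine correction''. That object cannot satisfy the third property: on Hamiltonians supported in a displaceable disk of $S^2$, $\mu$ restricts to the Calabi \emph{homomorphism}, which is unbounded and not identically zero. Now take $\phi$ supported in a small disk $U$ contained in the invariant disk $D$ (so $\phi(D)=D$ and $[\traj_D(\phi)]=0$): your $\rho_A(\phi)$ is a nonzero multiple of $\Cal_U(\phi|_U)$, which ranges over all of $\RR$, while the proposition forces $\rho_A(\phi)=0$. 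More generally the single pullback is not constant on the cosets $S_n=\{\phi:\phi(\mathcal{D})=\mathcal{D},\ [\traj_{\mathcal{D}}(\phi)]=n\alpha\}$, and the discrepancy is an unbounded homomorphism, so no additive constant (which would in any case destroy homogeneity) or rescaling can repair it. For the same reason the $C^0$-continuity claim fails as stated: the Entov--Polterovich quasimorphism itself is \emph{not} $C^0$-continuous (the Calabi homomorphism is not), and the Entov--Polterovich--Py criterion applies only to quasimorphisms that vanish on maps supported in (displaceable) disks. The paper's essential device, which your sketch omits, is to use \emph{two} embeddings $j_{s_1},j_{s_2}:\mathbb{A}\to S^2$ (into a sphere of area $2A$, so that $\partial\mathcal{D}$ bisects it) and set $\rho=\Cal_{j_{s_2}}-\Cal_{j_{s_1}}$: the Calabi contributions cancel, $\rho$ vanishes on all disk-supported maps (whence $C^0$-continuity), and $\rho(\widehat{\Phi})=2A(s_2-s_1)>0$. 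Your parameter variation appears only at the end, for linear independence, not in the definition where it is actually needed.

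Even granting the correct $\rho$, the substantive step is not the evaluation on a model twist but the proof that $\rho$ \emph{vanishes on $S_0$}, i.e.\ on all disk-preserving maps with trivial trajectory; your replacement of this by ``rotate the generating function so it is supported away from $D$'' is not a legitimate operation for a general $\phi$ preserving $D$, since such $\phi$ can act arbitrarily inside $D$ and off $D$. The paper handles this by reducing to maps fixing a neighborhood of $\partial\mathcal{D}$ pointwise, splitting off the disk part, analyzing the mapping class group of the pair of pants $\mathbb{A}\setminus\mathcal{D}$ (Dehn twists near the three boundary circles, with the twist near $\partial\mathcal{D}$ absorbed at small Hofer cost and the other two killed by the trivial-trajectory hypothesis), correcting the flux by a disk-supported flow, and then computing that $\Cal_{j_s}$ of the resulting pair-of-pants map is independent of $s$ --- an argument that intrinsically uses the two-embedding difference structure your construction lacks. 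Citing \cite{Kh:disk-an} for ``the computation'' is citing precisely this missing argument, and it does not attach to the quasimorphism you actually defined. A minor additional slip: $\Ham(S^2)$ is not simply connected ($\pi_1\cong\ZZ_2$); the Entov--Polterovich quasimorphism still descends to $\Ham(S^2)$ because homogeneous quasimorphisms vanish on torsion, but the justification you give should be corrected.
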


The proposition implies Theorem~\ref{T:quas}.
\begin{proof}[Proof of Theorem~\ref{T:quas}]
Pick such a quasimorphism $\rho_\mathbb{A} : \Ham (\mathbb{A}) \to \RR$ and denote by 
\[
  i_j : U \setminus \{p_1, \ldots , p_k\} \hookrightarrow U \setminus \{p_j\} \simeq \mathbb{A}, \quad 1 \leq j \leq k
\]
the embeddings obtained by gluing all punctures except for $p_j$ and identifying the resulting once punctured disk with $\mathbb{A}$.
The pullback quasimorphisms $\rho_j = i_j^* \rho_\mathbb{A} : \Ham (U \setminus \{p_1, \ldots , p_k\}) \to \RR$ satisfy the Lipschitz and
continuity properties. Given a Hamiltonian $\phi$ with a fixed disk $\mathcal{D}$ of area $\geq A$, $\rho_j (\phi)$ counts
the winding number of $\traj_{\mathcal{D}} (\phi)$ around $p_j$. Denote by $\gamma_j$ a small positively oriented circle in $U$ around $p_j$
which does not encircle other punctures. $\{[\gamma_j]\}_{j=1}^k$ is a basis for $H_1(U \setminus \{p_1, \ldots , p_k\}; \ZZ)$.
The dual basis $\{[\gamma_j]^*\}$ allows us to write
\[
  \rho_j (\phi) = [\gamma_j]^*[\traj_{\mathcal{D}} (\phi)]
\]
for all Hamiltonians $\phi$ with a fixed disk $\mathcal{D}$ as above. This implies 
\[
  [\traj_{\mathcal{D}} (\phi)] = \sum_{j=1}^k [\gamma_j]^*[\traj_{\mathcal{D}} (\phi)] \cdot [\gamma_j] = \sum_{j=1}^k \rho_j (\phi) [\gamma_j].
\]
\[
  \rho_A = \sum_{j=1}^k \rho_j (\cdot) [\gamma_j] : \Ham (U \setminus \{p_1, \ldots , p_k\}) \to H_1 (U \setminus \{p_1, \ldots , p_k\}; \RR)
\]
is a homogeneous quasimorphism which satisfies all properties of Theorem~\ref{T:quas}. Continuum of linearly independent choices of 
$\rho_\mathbb{A}$ implies that for a choice of $\rho_A$.
\end{proof}

\medskip

Proof of Proposition~\ref{P:quas} takes the rest of this section. It can be found in ~\cite{Kh:disk-an}, we give it below 
for the sake of completeness.  
We use notation $\mathbb{A} = S^1 \times (0, 1) \simeq \RR / \ZZ \times (0, 1)$ equipped with the 
standard symplectic form $\omega = \mathrm{d}\theta \wedge \mathrm{d}h$, $\Area (\mathbb{A}) = 1$. Without loss of generality we assume
that the generator $\alpha \in H_1 (\mathbb{A}; \ZZ)$ is represented by the positively oriented circle $S^1 \times \{0.5\}$.
Let $\mathcal{D} \subset \mathbb{A}$ be a disk with $\Area (\mathcal{D}) = A$, denote $L = \partial \mathcal{D}$. 
Put $S = \{ \phi \in Ham (\mathbb{A}) \, | \, \phi (\mathcal{D}) = \mathcal{D}\}$ to be the stabilizer of $\mathcal{D}$ in 
$\Ham(\mathbb{A})$. We denote $S_n = \{\phi \in S \, | \, [\traj_\mathcal{D} (\phi)] = n \alpha\}$.

Pick a function $\widehat{H} : \mathbb{A} \to \RR$ with compact support such that $\widehat{H}(\theta, h) = h$
away from a small neighborhood of $\partial \mathbb{A}$. Denote by $\widehat{\Phi}$ the time-$1$ map of the 
Hamiltonian flow generated by $\widehat{H}$. It is easy to see that $\widehat{\Phi} \in S$ with trajectory 
$[\traj_\mathcal{D} (\widehat{\Phi})] = \alpha$. Note that $S = \bigcup_{n \in \ZZ} S_n = \bigcup_{n \in \ZZ} \widehat{\Phi}^n S_0$.
We construct a quasimorphism $\rho_A$.

\underline{Step I:} it is sufficient to construct a homogeneous quasimorphism $\rho: Ham(\mathbb{A}) \to \RR$ which satisfies:
\begin{itemize}
  \item 
	$\rho$ is Hofer-Lipschitz,
  \item
	$\rho (S_0) = 0$, $\rho (\widehat{\Phi}) = c > 0$,
  \item
	$\rho$ vanishes on $\Ham(U)$ for all disks $U \subset \mathbb{A}$.
\end{itemize}

\begin{proof} $\rho$ vanishes on Hamiltonians supported in a disk. It follows from the results of ~\cite{En-Po-Py:qm-continuity} that 
such $\rho$ is continuous in the $C^0$-topology.
Any $\phi \in S_n$ decomposes as 
$\phi = \widehat{\Phi}^n \circ s$ for some $s \in S_0$. Hence 
$$|\rho (\phi) - c n| = |\rho (\widehat{\Phi}^n \circ s) - n \rho(\widehat{\Phi}) - 0| = 
|\rho (\widehat{\Phi}^n \circ s) - \rho(\widehat{\Phi}^n) - \rho(s)| < R.$$
($R$ denotes the defect of $\rho$). It follows that 
\[
	\rho (\phi) = c \cdot n + \delta_{\phi} 
\]
where $|\delta_{\phi}| \leq R$. Note that $\phi^k \in S_{nk}$, hence by homogenuity
\[
	\rho (\phi) = \frac{\rho (\phi^k)}{k} = \frac{c \cdot n k + \delta_{\phi^k}}{k} = c \cdot n + \frac{\delta_{\phi^k}}{k}
\]
which implies $\rho (\phi) = c \cdot n$ in the limit as $k \to \infty$.

We set $\rho_A = \frac{\rho}{c}$. Suppose that $\phi \in \Ham(\mathbb{A})$ has an invariant disk $D$ of $\Area(D) \geq A$.
\begin{enumerate}
  \item 
	We consider the case $D = \mathcal{D}$. Then $\phi \in S_n$ for some $n \in \ZZ$ and 
	\[
	  \rho_A (\phi) \cdot \alpha = \frac{\rho(\phi)}{c} \cdot \alpha = n \alpha =[\traj_D (\phi)].
	\]
  \item 
	The case $\Area(D) = A$. Pick $g \in \Ham(\mathbb{A})$ such that $g (D) = \mathcal{D}$. Then $g \phi g^{-1} \in S$ hence
	\[
	  \rho_A (\phi) \cdot \alpha = \rho_A (g \phi g^{-1}) \cdot \alpha =[\traj_\mathcal{D} (g \phi g^{-1})] = [\traj_D (\phi)].
	\]
  \item 
	The case $\Area(D) > A$. Pick a smaller disk $D_1 \subset D$ with $\Area(D_1) = A$. $\phi (D) = D$, choose $g \in \Ham(D)$
	such that $g (\phi (D_1)) = D_1$. Then $D_1$ is a fixed disk of area $A$ for $g \phi$ hence 
	\[
	  \rho_A (g \phi) \cdot \alpha = [\traj_{D_1} (g \phi)] = [\traj_D (\phi)].
	\]
	We claim that $\rho_A (g \phi) = \rho_A (\phi)$. Indeed, the disk $D$ is fixed for $g \phi$, $\phi$ and their iterates.
	Moreover, $(g \phi)^n$ differs from $\phi^n$ by a Hamiltonian $f_n \in \Ham(D)$.
	Hence by the homogenuity of $\rho_A$
	\[
	  |\rho_A ((g \phi)^n) - \rho_A(\phi^n) - \rho_A (f_n)| = |n \rho_A (g \phi) - n\rho_A(\phi) - 0| = n \cdot |\rho_A (g \phi) - \rho_A(\phi)|
	\]
	is bounded by the defect of $\rho_A$. Letting $n \to \infty$ we show $\rho_A (\phi) = \rho_A (g \phi)$.
\end{enumerate}
\end{proof}

\medskip

\underline{Step II:} in order to build $\rho$ we use the Calabi quasimorphism 
on $Ham (S^2)$ which was constructed by M. Entov and L. Polterovich 
in ~\cite{En-Po:calqm}. We give a brief recollection of the relevant facts.

Let $U$ be an open disk equipped with a symplectic form $\omega$. 
Let $F_t : U \to \RR$, $t \in [0, 1]$ be a time-dependent smooth function with compact support. We define
$$\widetilde{\Cal} (F_t) = \int_0^1 \left( \int_U F_t \omega \right) \dd t.$$ As $\omega$ is exact on $U$,
$\widetilde{\Cal}$ descends to a homomorphism $\Cal_U: \Ham(U) \to \RR$ which is called 
the Calabi homomorphism. Clearly, for $\phi \in \Ham(U)$, $| \Cal_U(\phi) | \leq \Area(U) \cdot \| \phi \|$.

Let $S^2$ be a sphere equipped with a symplectic form $\omega$. Suppose $\Area (S^2) = 2 A$.
For a smooth function $F: S^2 \to \RR$ the Reeb graph $T_F$ is defined as the set of connected
components of level sets of $F$ (for a more detailed definition we refer the reader to ~\cite{En-Po:calqm}). 
For a generic Morse function $F$ this set, equipped with the topology 
induced by the projection $\pi_F: S^2 \to T_F$, is homeomorphic to a tree. 
We endow $T_F$ with a measure given by $\mu (X) = \int_{\pi_F^{-1}(X)} \omega$ 
for any $X \subseteq T_F$ with measurable $\pi_F^{-1}(X)$. $x \in T_F$ is called a \emph{median} of $T_F$
if the measure of each connected component of $T_F \setminus \{x\}$ does not exceed $A$. By ~\cite{En-Po:calqm}
a median exists and is unique.
This construction can be extended to functions $F$ such that $F \big|_{supp (F)}$ is Morse.

~\cite{En-Po:calqm} describes construction of a homogeneous quasimorphism $\Cal_{S^2} : \Ham (S^2) \to \RR$.
It has the following properties: $\Cal_{S^2}$ is Hofer-Lipschitz ($|\Cal_{S^2}(\phi)| \leq 2A \cdot \| \phi \|$).
In the case when $\phi \in \Ham(S^2)$ is supported in a disk $U$ which is displaceable 
in $S^2$, $\Cal_{S^2} (\phi) = Cal_U (\phi \big|_U)$.
Moreover, for $\phi \in \Ham(S^2)$ generated by an autonomous function $F: S^2 \to \RR$, 
$\Cal_{S^2} (\phi)$ can be computed in the following way. 
Let $x$ be the median of $T_F$ and $X = \pi_F^{-1} (x)$ be the corresponding subset of $S^2$.
Then 
\[
	\Cal_{S^2}(\phi) = \int_{S^2} F \omega - 2A \cdot F(X).
\]

Given a symplectic embedding $j : \mathbb{A} \to S^2$ into a sphere of area $2 A$, 
consider the pullback $Cal_{j} = j^* (Cal_{S^2}) : Ham(\mathbb{A}) \to \RR$. 
Namely, given $\phi \in Ham(\mathbb{A})$, extend $j_* (\phi)$ to $\tilde{\phi} \in \Ham (S^2)$ by identity 
on the complement of $j (\mathbb{A})$. Then $Cal_j (\phi) = \Cal_{S^2} (\tilde{\phi})$.
Clearly, $Cal_{j}$ is a homogeneous quasimorphism.
It has the following properties:
\begin{itemize}
	\item
		$Cal_j (\phi) = \Cal_D(\phi \big|_D)$ for any $\phi$ supported in a disk $D$ of area $A$.
		To see that note that the corresponding $\tilde{\phi} \in Ham(S^2)$ 
		is supported in a displaceable disk $j(D)$ in $S^2$.
	\item
		\[
			\left|\Cal_j(\phi)\right| = \left|\Cal_{S^2}\left(\tilde{\phi}\right)\right| \leq 
			2A \cdot \left\| \tilde{\phi} \right\|_{S^2} \leq 2A \cdot \left\| \phi \right\|_{\mathbb{A}}.
		\]
	\item for an autonomous $\phi$ generated by a compactly supported function $H : \mathbb{A} \to \RR$, 
		\[
			Cal_j (\phi) = \int_\mathbb{A} H \omega - 2A \cdot H(X)
		\]
		where $X \subseteq \mathbb{A}$ is the level set component which is sent by $j$ to the median set
		of $j_* (H)$ in $S^2$.
\end{itemize}

\medskip

Consider the embeddings $j_s : \mathbb{A} \to S^2$ ($0 \leq s \leq 2 A - 1$) into a sphere of area $2 A$ 
that are given by gluing a disk of area $s$ to $S^1 \times \{0\}$ and a disk of area
$(2 A - 1 - s)$ to $S^1 \times \{1\}$.
This construction ensures that $j_s (L) = j_s (\partial \mathcal{D})$ bisects $S^2$ 
into two displaceable disks.

We pick $0 \leq s_1 < s_2 \leq 2 A - 1$ and set 
\begin{equation}\label{eq:qm_def}
	\rho = \rho_{s_1, s_2} = Cal_{j_{s_2}} - Cal_{j_{s_1}}.
\end{equation}

\medskip

We claim that $\rho$ satisfies conditions of Step I.
Obviously, $\rho$ is a homogeneous quasimorphism on $Ham(\mathbb{A})$ which satisfies 
the Lipschitz property:
\[
	|\rho (\phi)| \leq |Cal_{j_{s_2}} (\phi)| + |Cal_{j_{s_1}} (\phi)| \leq 4 A \cdot \|\phi\|.
\]
Consider the function $\widehat{H}$ which was used to define the Hamiltonian $\widehat{\Phi}$ described 
above. It is easy to see that the ``median'' level set $X_s$ of $\widehat{H}$
which is relevant for the computation of $Cal_{j_s} (\widehat{\Phi})$ is $S^1 \times \{A-s\}$.
Hence 
\[
	Cal_{j_s} (\widehat{\Phi}) = \int_\mathbb{A} \widehat{H} \omega - 2A \cdot \widehat{H}(X_s) = 
		\int_\mathbb{A} \widehat{H} \omega - 2A \cdot (A-s).
\]
This implies
\[
	\rho(\widehat{\Phi}) = Cal_{j_{s_2}} (\widehat{\Phi}) - Cal_{j_{s_1}} (\widehat{\Phi})= 2A \cdot [- (A-s_2) + (A-s_1) ] = 2A \cdot (s_2 - s_1) > 0.
\]
Suppose that $\phi \in \Ham(U)$ for some disk $U \subset \mathbb{A}$. Then
\[
  \rho(\phi) = Cal_{j_{s_2}} (\phi) - Cal_{j_{s_1}} (\phi) = Cal_{S^2} (j_{s_2, *} (\phi)) - Cal_{S^2} (j_{s_1, *} (\phi)) = 0
\]
since $j_{s_2, *} (\phi)$ is conjugate to $j_{s_1, *} (\phi)$.

\medskip

It is left to show that $\rho$ vanishes on $S_0$. Denote by 
$S'_0$ the subgroup of $S_0$ which fixes a neighborhood of $L$ pointwise.

\begin{lem}\label{lm:fix_neigh}
	Let $q$ be a homogeneous quasimorphism which is Hofer-continuous and vanishes on $S'_0$.
	Then $q$ vanishes on $S_0$.
\end{lem}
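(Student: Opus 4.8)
The plan is to reduce the statement to a density fact and then read off vanishing from the quasimorphism relation together with homogeneity; it is precisely the homogeneity that will turn the a priori bound "$|q|\le$ defect" into "$q\equiv 0$". So the first thing I would establish is the following density statement: for every $\phi\in S_0$ and every $\varepsilon>0$ there is $\psi\in S'_0$ with $\|\phi\psi^{-1}\|<\varepsilon$, i.e. $S'_0$ is dense in $S_0$ for the Hofer metric. This is the geometric heart of the lemma; the rest is bookkeeping.

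Granting the density statement, here is how the lemma follows. Fix $\phi\in S_0$ and choose $\psi_n\in S'_0$ with $\delta_n:=\phi\psi_n^{-1}\to\mathrm{id}$ in the Hofer metric. Writing $\phi=\delta_n\psi_n$, the quasimorphism inequality and $q(\psi_n)=0$ give $|q(\phi)-q(\delta_n)|\le R$, where $R$ is the defect of $q$; letting $n\to\infty$ and using that $q$ is Hofer-continuous at the identity (and $q(\mathrm{id})=0$ for a homogeneous quasimorphism) yields $|q(\phi)|\le R$ for every $\phi\in S_0$. Since $S_0$ is a group, $\phi^n\in S_0$ as well, so by homogeneity $n\,|q(\phi)|=|q(\phi^n)|\le R$ for all $n$, whence $q(\phi)=0$.

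It remains to prove density, and this is where I expect the real work — and the main obstacle — to be. The issue is entirely local near $L=\partial\mathcal D$. Since $\phi$ preserves $\mathcal D$ it preserves $L$ together with its two sides and transports no area across $L$. Fix a symplectic collar $C_w\cong L\times(-w,w)$ of $L$ with $C_w\cap\mathcal D$ the inner half. For $w$ small, $\phi(C_w)$ is again a thin collar of $L$ — the differential of the fixed diffeomorphism $\phi$ is bounded on a neighborhood of the compact curve $L$ — and on $C_w$ the map $\phi$ is close, after rescaling, to its linearization along $L$. Using this I would build $\tau_w\in\Ham(\mathbb{A})$, supported in $C_{2w}$, preserving the two sides of $L$ (hence $\mathcal D$), with $\tau_w=\phi^{-1}$ on $C_w$; it is genuinely Hamiltonian because it is supported in the contractible collar $C_{2w}$ and can be arranged to have zero flux, the flux being forced to vanish exactly because $\phi^{-1}$ moves no area across $L$. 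The Hofer energy of such an extension is governed by how far $\phi^{-1}$ rotates $L$ and by the shear and radial scaling of $\phi^{-1}$ across $C_w$, all of which are $O(w)$, so $\|\tau_w\|\to 0$. Then $\psi:=\tau_w\phi$ is the identity on a neighborhood of $L$, preserves $\mathcal D$, and has vanishing trajectory (both $\phi$ and $\tau_w$ do, the latter since it is supported near the nullhomotopic curve $L$), so $\psi\in S'_0$, while $\|\phi\psi^{-1}\|=\|\tau_w\|\to 0$. The delicate point is precisely the construction of $\tau_w$: producing a Hamiltonian map that undoes the germ of $\phi$ along $L$, stays in an arbitrarily thin collar, respects $\mathcal D$ and the winding, and costs arbitrarily little energy — a local normal-form and interpolation argument near $L$ combined with the vanishing-flux observation. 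Once this is in hand, the quasimorphism computation above is routine.
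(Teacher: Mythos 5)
Your overall scheme (reduce to Hofer-density of $S'_0$ in $S_0$, then use the quasimorphism inequality plus homogeneity on the subgroup $S_0$) is coherent, and the bookkeeping half is correct. But the proof has a genuine gap exactly where you yourself locate the ``real work'': the density claim is asserted, not proved, and it is strictly stronger than anything the paper establishes or needs. The paper's argument only produces a Hofer-small correction $\psi \in S_0$, supported near $L$, making $\psi\circ\phi=\Id$ \emph{on the curve $L$} -- this is cheap because a prescribed circle diffeomorphism of $L$ can be generated by a Hamiltonian of the form $\beta(h)\,(h-c)\,V_t(\theta)$ in a width-$w$ collar, whose oscillation is $O(w)$. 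The upgrade from ``identity on $L$'' to ``identity on a neighborhood of $L$'' is then done by a correction $h$ that is only $C^0$-small, not Hofer-small, and this is legitimate because $q$ is in addition $C^0$-continuous: since $\Ham(U)\subset S'_0$ for every disk $U\subset\mathbb{A}\setminus L$, the hypothesis forces $q$ to vanish on such $\Ham(U)$, and the Entov--Polterovich--Py continuity theorem then yields $C^0$-continuity. Your route discards this available $C^0$-continuity, so you are obliged to kill the entire germ of $\phi$ along $L$ by a Hamiltonian $\tau_w$ supported in a thin collar with $\|\tau_w\|\to 0$, i.e.\ to prove Hofer-density of $S'_0$ in $S_0$. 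That statement may be true, but it is the whole difficulty of the lemma, and the proposal gives no actual construction: nothing bounds the Hofer norm of an exact extension of $\phi^{-1}$ from $C_w$ to $C_{2w}$, nothing controls the cutoff (the time-one map of a truncated Hamiltonian need not agree with the prescribed germ on $C_w$, since points may leave the region where the Hamiltonian is untouched), and the flux normalization is only mentioned, not estimated.

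Moreover, one of the justifications offered for ``all of which are $O(w)$'' is wrong as stated: the amount by which $\phi^{-1}$ rotates $L$ is a fixed circle diffeomorphism, independent of $w$; it is not small. What is true -- and what the paper actually uses -- is that realizing this fixed rotation inside a width-$w$ collar costs only $O(w)$ Hofer energy, because the generating Hamiltonian can be taken linear in the normal coordinate. Undoing the normal derivative (the $\theta$-dependent shear) and all higher-order terms of the germ is a further, genuinely delicate interpolation-and-cutoff argument that your sketch does not supply. So either prove this quantitative extension lemma in full, or follow the paper's path: correct $\phi$ on $L$ only with Hofer-small cost, note that vanishing on $S'_0$ gives vanishing on $\Ham$ of disks disjoint from $L$ and hence $C^0$-continuity of $q$, and finish with a $C^0$-small correction near $L$.
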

\begin{proof}
Pick an open disk $U \subset \mathbb{A} \setminus L$. $\Ham (U) \subset S'_0$, therefore 
$q$ vanishes on $\Ham (U)$. It follows from the results of ~\cite{En-Po-Py:qm-continuity} that 
$q$ is continuous in the $C^0$-topology.

Suppose $\phi \in S_0$. This implies $\phi (L) = L$ (as a set). Applying an appropriate isotopy $\psi \in S_0$ supported near $L$, 
we may ensure that $\psi \circ \phi = \Id$ on $L$. Moreover, $\psi$ may be chosen with arbitrarily small Hofer norm. 
Further, we may find a $C^0$-small diffeomorphism $h \in Ham(\mathbb{A})$ such that
$h \circ \psi \circ \phi = \Id$ in a neighborhood of $L$. 
It follows that $h \circ \psi \circ \phi \in S'_0$ and $q (h \circ \psi \circ \phi) = 0$.
Hofer- and $C^0$-continuity of $q$ imply that $q (\phi) = 0$.
\end{proof}

Let $\phi \in S'_0$. We show that $\rho (\phi) = 0$. $\phi$ splits to a composition
$\phi = \phi_\mathcal{D} \circ \phi_P$ where $\phi_\mathcal{D}$ is supported in 
$\mathcal{D}$ and $\phi_P$ is supported in the pair of pants
$P = \mathbb{A} \setminus \mathcal{D}$. $\phi_\mathcal{D}, \phi_P$ have disjoint supports, 
therefore they commute. Hence $\rho (\phi) = \rho(\phi_\mathcal{D}) + \rho (\phi_P)$. 
Note that $\phi_\mathcal{D}$ is supported in a disk, so $\rho (\phi_\mathcal{D}) = 0$.

$\phi_P$ is Hamiltonian in $\mathbb{A}$, but after the restriction to $P$
we have just $\bar{\phi}_P = \phi_P\big|_P \in Symp_c(P)$. In the argument below we apply a sequence of 
deformations to $\phi_P$ in order to get $\phi'_P$ whose restriction $\bar{\phi}'_P \in Ham (P)$. All deformations
involved in the process preserve the value $\rho (\phi_P)$. Finally, we show that
$\rho (\phi'_P) = 0$ by explicit computation.
 
The mapping class group 
$\pi_0 (Symp_c (P))$ is isomorphic to $\ZZ^3$ and is generated by Dehn twists near the
three boundary components. For the proof of this fact we refer the reader to 
~\cite{F-M:mapping-class} where the authors show that $\pi_0 (Diff_c (P)) \simeq \ZZ^3$
and is generated by Dehn twists. Note that $\phi, \psi \in Symp_c (P)$ are isotopic in 
$Symp_c$ if and only if they are isotopic in $Diff_c$. As Dehn twists belong to
$Symp_c (P)$, the statement for $\pi_0 (Symp_c (P))$ follows. 

Denote by $T_1, T_0$ Dehn twists near $S^1 \times \{1\}$, $S^1 \times \{0\}$ 
and by $T_L$ a Dehn twist in 
$P$ near $L = \partial \mathcal{D}$. 
There are Hamiltonians $\psi_L$ in $S'_0$ with arbitrary small Hofer norm
whose restriction to $P$ realizes the Dehn twist $T_L$. For example, consider a bump function supported near $\mathcal{D}$
which has small height but is very steep in an annulus near $L$. 
$\bar{\phi}_P$ is isotopic in $Symp_c (P)$
to some $T_1^{k_1} T_0^{k_0} T_L^{k_L}$ ($k_i \in \ZZ$). 
If $k_L \neq 0$ we replace the original $\phi \in S'_0$ by $\psi_L^{-k_L} \circ \phi \in S'_0$.
As $\|\psi_L\|$ can be chosen to be arbitrarily small, by continuity of $\rho$ it is enough to show 
the desired statement for the deformed $\phi$.
After the replacement $k_L$ vanishes, hence the modified $\bar{\phi}_P \sim T_1^{k_1} T_0^{k_0}$.
Note that $\bar{\phi}_P$ is induced by a Hamiltonian $\phi \in S$. The definition of
$\traj_{\mathcal{D}} (\phi)$ implies that $k_1 \alpha = [\traj_{\mathcal{D}} (\phi)] = -k_0 \alpha$
(recall that $\alpha$ is the chosen generator of $H_1 (\mathbb{A}; \ZZ)$).
The minus sign appears because opposite orientations of the boundary components
result in the opposite directions of the corresponding Dehn twists. 
Moreover, as $\phi \in S'_0$, $[\traj_{\mathcal{D}} (\phi)] = 0$ hence $k_1 = k_0 = 0$.
Therefore the restriction $\bar{\phi}_P$ belongs to the identity component of $Symp_c (P)$. 

Pick $K : \mathbb{A} \to \RR$ supported in a small neighborhood of $\mathcal{D}$ 
such that $K = 1$ in a neighborhood of the closure $\overline{\mathcal{D}}$.  
Denote by $\chi^t$ the time-$t$ map generated by the Hamiltonian flow of $K$. 
$\chi^t$ is supported in a disk in $\mathbb{A}$, hence $\rho (\chi^t) = 0$ for all $t$.

Consider the homomorphism $i_* : H^1_c (P; \RR) \to H^1_c (\mathbb{A}; \RR)$ induced by inclusion $i : P \to \mathbb{A}$. 
Both $\chi^t, \phi_P$ are Hamiltonian in $\mathbb{A}$, hence their fluxes are zero in $H^1_c(\mathbb{A}; \RR)$.
After restriction to $P$, $flux (\chi^t\big|_P), flux (\bar{\phi}_P)$ belong to one-dimensional subspace $\ker i_* \subset H^1_c (P; \RR)$.
$flux (\chi^t\big|_P) \neq 0$, therefore one can find an appropriate $t_\phi \in \RR$
such that $\phi'_P = \chi^{t_\phi} \circ \phi_P$ restricts to $\bar{\phi}'_P = \chi^{t_\phi}\big|_P \circ \bar{\phi}_P$ 
with zero flux in $P$.
The results of ~\cite{Ba:structure-groups} imply $\bar{\phi}'_P \in Ham (P)$.

Pick a compactly supported function $F_t : P \times [0, 1] \to \RR$ whose flow generates $\bar{\phi}'_P$.
Denote by $U_s$ the complement of the closed disk $\overline{j_s(\mathcal{D})}$ in $S^2$, it is a displaceable disk. 
$j_{s,*} (\phi'_P) \in Ham(S^2)$ and it is supported in $U_s$, therefore
\[
	Cal_{j_s} (\phi'_P) = Cal_{U_s} (j_{s,*} (\phi'_P)) = 
				\int_0^1 \left( \int_{U_s} j_{s,*} F_t \omega \right) \dd t = \int_0^1 \left( \int_P F_t \omega \right) \dd t
\]
is independent of $s$. Hence $\rho(\phi'_P) = 0$. From $\phi'_P = \chi^{t_\phi} \circ \phi_P$
we obtain that $$|\rho (\phi'_P) - \rho (\chi^{t_\phi}) - \rho (\phi_P)| = |\rho (\phi_P)|$$
is bounded by the defect of $\rho$.
It follows that $\rho$ is bounded on the subgroup $S'_0$. As $\rho$ is homogeneous, it vanishes there.

\begin{remnonum}
  Choice of different parameters $s_1, s_2$ in ~\eqref{eq:qm_def} gives rise to 
  different quasimorphisms $\rho_A = \rho_{A, s_1, s_2}$. If we fix $s_1$ and let $s_2$ vary, we get a linearly independent family.
  We show that by an argument from ~\cite{Bi-En-Po:Calabi-qm}: 
  let $H (\theta, h) = H (h): \mathbb{A} \to \RR$ be a function which depends only on $h$ and denote by $\phi_H \in \Ham(\mathbb{A})$ 
  the corresponding Hamiltonian.
  The median level set relevant for $Cal_{j_s} (\phi_H)$ is $S^1 \times \{A - s\}$, hence
  \[
	\rho_{A, s_1, s_2} (\phi_H) = Cal_{j_{s_2}} (\phi_H) - Cal_{j_{s_1}} (\phi_H) = 2A \cdot (H (A-s_1) - H (A-s_2)).
  \]
  If we consider the set of all Hamiltonians generated by functions $H = H(h)$, 
  the values of the family $\{\rho_{A, s_1, s_2}\}_{s_2}$ applied to this set are linearly independent.
\end{remnonum}

\section{Discussion} \label{S:disc}

\subsection {Comparison with the Riemannian length spectrum}
In the case of a Riemannian manifold $(M, g)$ the marked length spectrum $l : \pi_1 (M) \to \RR$ contains a lot of information 
regarding the metric $g$. For example, in a closed negatively curved surface $(M, g)$, $g$ is completely determined by $l$ (~\cite{Otal:spec}).

The symplectic analogue looks much less rewarding. The only case when $l_A : H_1 (M; \ZZ) \to \RR$ is not bounded is the case of
a punctured disk. Therefore Hofer's length spectrum is able to detect genus zero surfaces of bounded area. It may happen that more
accurate estimates of $l_A$ provide additional information regarding the topology of $M$. 

On the other side, the symplectic setting is 
less rich than the Riemannian one. The equivalence class of $(M, \omega)$ is determined by the genus, $\text{rk} (H_1 (M; \ZZ))$ and $\Area(M)$.
In the case of finite area $\omega$ is just a scaling factor, hence asymptotic behavior behavior of $l_A$ depends only on the topology of $M$.
Clearly, $l_A$ is able to extract some of the topological information of $M$. This result may be interpreted as the fact that the geometry of $\Ham (M)$ sees some of
the topology of $M$. 

\subsection {Homotopical spectrum} \label{S:disc_homo}
It would be interesting to provide similar estimates for $l_A : \pi_1 (M) \to \RR$. 
When we consider the asymptotical behavior of $l_A$, it is more convenient to discuss the stabilized version 
\[
  \bar{l}_A (\alpha) = \lim_{n \to \infty} \frac{l_A (n \alpha)}{n}.
\]
The homological version of $\bar{l}_A$ is a norm on $H_1$ when $2 A > \Area(M)$ and $\genus(M) = 0$. In all other cases $\bar{l}_A \equiv 0$.

If we replace $H_1$ by $\pi_1$, the argument from the previous sections gives the following partial information:
\begin{itemize}
  \item
	Suppose that $2 A \leq \Area(M)$. Then $\bar{l}_A \equiv 0$. This is true since the argument of Section~\ref{S:up-disp} 
	applies for $\pi_1$ and not just $H_1$.
  \item
	Suppose $2 A > \Area(M)$ and $\genus(M)>0$. Assume that $\alpha$ is represented by a simple nonseparating loop. Then
	$n \alpha$ can be represented by catenation of two simple loops, hence $l_A (n \alpha) \leq 2 A$. Therefore $\bar{l}_A (\alpha) = 0$
	and $\bar{l}_A (k \alpha) = 0$ for all $k \in \ZZ$.
  \item
	Suppose $2 A > \Area(M)$ and $\genus(M)=0$. If $\alpha$ corresponds to a nonzero homology class
	then $\bar{l}_A (\alpha) > 0$.
\end{itemize}

The author does not know estimates for most of the remaining cases. For example, let $2 A > \Area(M)$ and $\genus(M) geq 2$.
For a primitive self intersecting class $\alpha$ the number of simple curves needed to represent $n \alpha$ is not bounded as $n \to \infty$
(see ~\cite{Ca:wlen}), so the argument for the upper bound does not give a lot. On the other hand, we do not know any tools that may give a nontrivial 
lower bound. There are several constructions of quasimorphisms on positive genus surfaces, but none of them is known to respect Hofer's metric. 
When $\genus(M) = 0$ and $\alpha$ belongs to the commutator subgroup $[\pi_1 (M), \pi_1 (M)]$, the argument used in this paper
fails as translation by $\alpha$ cannot be realized by an autonomous flow. It may happen that Entov-Polterovich quasimorphisms
used in our argument still imply a nontrivial lower bound. Unfortunately, the author was not able to compute them on appropriate 
non-autonomous examples.

D. Calegari observed that if one takes the word length norm with 
respect to the generating set of simple loops, then $A$ times its stabilized version  $A \| \cdot \|_S$ satisfies all properties of $\bar{l}_A$ described above
(we assume $2A > \Area(M)$). Indeed, the discussion in Section~\ref{S:up} implies $\bar{l}_A \leq A \| \cdot \|_S$ and we do not know any example 
where the inequality is strict.

\subsection{Higher dimensions}
Most arguments and constructions used in this paper completely fail when one considers balls or tori in higher dimension manifolds.
For example, upper bounds for $l_A (\alpha)$ were induced by decomposition of $\alpha$ into simple loops. 
In dimension $\geq 4$ any loop can be perturbed into a simple one. However, we cannot translate a ball through a pipe containing
such simple loop because of the `symplectic camel' phenomenon.

\subsection{The quasimorphism $\rho_A$} \label{S:disc_quas}
Theorem~\ref{T:quas} describes quasimorphisms $\rho_A : \Ham (M) \to H_1 (M; \RR)$ for a $k$ times punctured disk $M$.
$\rho_A$ can be seen as a generalization of the rotation number $\rho : \widetilde{Homeo}_+ (S^1) \to \RR$. Indeed, $\rho$
satisfies the following: it is Lipschitz with respect to the $C_0$-norm on $\widetilde{Homeo}_+ (S^1)$. If $f \in \widetilde{Homeo}_+ (S^1)$
has a fixed point $x$, then $\rho (f)$ computes the degree of the trajectory of $x$ under $f$. In fact, $\rho$ is determined by the 
second property, while there is a continuous family if linearly independent homogeneous quasimorphisms satisfying same properties as $\rho_A$.

Let $f_t$ be a Hamiltonian isotopy and $\mathcal{D}$ a disk of area $A$ or greater. 
The computation in Section~\ref{S:qm} shows that $\rho_A$ vanishes on those isotopies $f_t$ that fix $\partial \mathcal{D}$ for all $t$.
This implies that the value $\rho_A (f_1)$ is determined up to a bounded defect by the trajectory $f_t (\partial \mathcal{D})$.
That is, for any $g_t$ such that $g_t (\partial \mathcal{D}) = f_t (\partial \mathcal{D})$, $\rho_A (g_1) - \rho_A (f_1)$ is bounded.
Therefore $\rho_A$ can be thought as an invariant of Hamiltonian isotopies of $\partial \mathcal{D}$. 

In particular, $\rho_A$ nearly ignores those dynamical attributes that do not affect a large enough disk in $M$, it sees only `global' features that
deform all large disks. In this sense it is different from most other generalizations of rotation number to dimension two.

A possible application is the ability to detect non-existence of large fixed or periodic disks. Namely, if we pick two
such quasimorphisms $\rho_A, \rho'_B$ and $\rho_A(f) \neq \rho'_B (f)$, then
$f$ has neither fixed nor periodic disks of area $\geq \max\{A, B\}$. More than that, if one wishes to perturb $f$ in order to have such a periodic 
disk, $\rho_A(f) - \rho'_B (f)$ allows to estimate a lower bound on Hofer's energy of such perturbation. However, current techniques 
allow to compute the value of $\rho_A$ in very limited number of scenarios (like the case of autonomous Hamiltonians, locally supported and etc.) 
which limits practical use of this observation.

Another application is as follows. Pick a non-displaceable disk $\mathcal{D}$ in $M$ and two linearly independent quasimorphisms $\rho_A, \rho'_B$ 
where $A, B \leq \Area{\mathcal{D}}$. Denote by $\mathcal{L}$ the space of Lagrangians Hamiltonian isotopic to $\partial \mathcal{D}$.
Suppose that $f_t$ is a Hamiltonian isotopy. A simple computation shows that $\tau = \rho_A - \rho'_B$ depends (up to a bounded defect) just on
$f_1 (\partial \mathcal{D}) \in \mathcal{L}$. This way $\tau$ gives rise to an invariant on $\mathcal{L}$ which is Lipschitz with respect to the induced Hofer's norm. 
$\tau$ is not bounded since it is a nonzero homogeneous quasimorphism, therefore by Lipschitz property the space $\mathcal{L}$ is not bounded in Hofer's norm as well
(a similar argument with more details can be found in ~\cite{Kh:thesis}).

\bibliography{bibliography}

\begin{thebibliography}{Kha2}

\bibitem[Ban]{Ba:structure-groups}
A.~Banyaga.
\newblock {\em The structure of classical diffeomorphism groups}, volume 400 of
  {\em Mathematics and Its Applications}.
\newblock Kluwer Academic Publishers Group, Dordrecht; Boston, 1997.

\bibitem[BEP]{Bi-En-Po:Calabi-qm}
P.~Biran, M.~Entov, and L.~Polterovich.
\newblock Calabi quasimorphisms for the symplectic ball.
\newblock {\em Commun. Contemp. Math.}, 6(5):793--802, 2004.

\bibitem[Cal]{Ca:wlen}
D.~Calegari.
\newblock Word length in surface groups with characteristic generating sets.
\newblock {\em Proc. Amer. Math. Soc.}, 136(7):2631--2637, 2008.

\bibitem[EP]{En-Po:calqm}
M.~Entov and L.~Polterovich.
\newblock Calabi quasimorphism and quantum homology.
\newblock {\em Int. Math. Res. Not.}, 2003(30):1635--1676, 2003.

\bibitem[EPP]{En-Po-Py:qm-continuity}
M.~Entov, L.~Polterovich, and P.~Py.
\newblock On continuity of quasimorphisms for symplectic maps.
\newblock In {\em Perspectives in Analysis, Geometry, and Topology (a volume
  dedicated to Oleg Viro's 60th birthday)}, volume 296 of {\em Progress in
  Mathematics}. Birkh\"{a}user, Basel, 2012.

\bibitem[FM]{F-M:mapping-class}
B.~Farb and D.~Margalit.
\newblock {\em A {P}rimer on {M}apping {C}lass {G}roups}.
\newblock Princeton Mathematical Series. Princeton University Press, Princeton,
  NJ, 2011.

\bibitem[Kha1]{Kh:disk-an}
M.~Khanevsky.
\newblock Hofer's norm and disk translations in an annulus.
\newblock Can be found at arXiv:1111.1923 [math.SG].

\bibitem[Kha2]{Kh:thesis}
M.~Khanevsky.
\newblock {\em Geometric and topological aspects of Lagrangian submanifolds -
  intersections, diameter and Floer theory}.
\newblock PhD thesis, Tel Aviv University, 2011.

\bibitem[LM]{La-MD:L-infty-geom}
F.~Lalonde and D.~McDuff.
\newblock Hofer's $l^\infty$-geometry: energy and stability of hamiltonian
  flows, part ii.
\newblock {\em Invent. Math.}, 122(1):35--�69, 1995.

\bibitem[Ota]{Otal:spec}
J.-P. Otal.
\newblock Le spectre marque des longueurs des surfaces a courbure negative.
\newblock {\em Annals of Math.}, 131:151--162, 1990.

\bibitem[Rou]{LR:6-questions}
F.~Le Roux.
\newblock Six questions, a proposition and two pictures on {H}ofer distance for
  {H}amiltonian diffeomorphisms on surfaces.
\newblock In Y.-G.~Oh A.~Fathi and C.~Viterbo, editors, {\em Symplectic
  Topology and Measure Preserving Dynamical Systems}, volume 512 of {\em
  Contemporary Mathematics}, pages 33--40. Amer. Math. Soc., Providence, RI,
  2010.

\end{thebibliography}

\end{document}